\documentclass[12pt]{amsart}

\usepackage{a4,color}

\usepackage{amsmath, amssymb}

\usepackage{mathrsfs}

\usepackage{exscale,txfonts}

\usepackage{enumerate}



\renewcommand{\iff}{\Leftrightarrow}

\newcommand{\ox}{\otimes}
\newcommand{\got}[1]{\mathfrak{#1}}

\renewcommand{\bar}{\overline}

\newcommand{\bbar}{\bar{\phantom{x}}}

\newcommand{\axstar}{\langle\bar X,\bar X^*\rangle}
\newcommand{\T}{t}

\DeclareMathOperator{\GM}{GM}
\DeclareMathOperator{\UD}{UD}
 \DeclareMathOperator{\Int}{Int}
\DeclareMathOperator{\Sym}{Sym} 
 
\DeclareMathOperator{\Trd}{Trd} 
\DeclareMathOperator{\Nrd}{Nrd} 
 \DeclareMathOperator{\tr}{tr}

\DeclareMathOperator{\sign}{sign}

\newcommand{\rc}{ {\rm rc} }

\newcommand{\<}{\langle}
\renewcommand{\>}{\rangle}
\newcommand{\x}{\times}

\DeclareMathOperator{\ad}{ad}
\DeclareMathOperator{\J}{J}

\newcommand{\sos}[1]{\ensuremath{\sum{#1}^{2}}}

\newcommand{\R}{\mathbb{R}}
\newcommand{\C}{\mathbb{C}}
\newcommand{\N}{\mathbb{N}}

\newcommand{\KK}{\mathbb{K}}

\newcommand{\ve}{\varepsilon}
\newcommand{\vf}{\varphi}

\newcommand{\s}{\sigma}

\newcommand{\GL}{\mathrm{GL}}





\newtheorem{lemma}{Lemma}[section]
\newtheorem{theorem}[lemma]{Theorem}
\newtheorem{propo}[lemma]{Proposition}
\newtheorem{coro}[lemma]{Corollary}

\newtheorem*{conj}{The PS~Conjecture}

\theoremstyle{definition}
\newtheorem{defi}[lemma]{Definition}

\newtheorem{remark}[lemma]{Remark}
\newtheorem{exam}[lemma]{Example}


\title[The Procesi--Schacher conjecture]{The Procesi--Schacher conjecture and Hilbert's 17th problem for algebras with involution}
\author{Igor Klep}
\author{Thomas Unger}
\address{Univerza v Mariboru, Fakulteta za matematiko in naravoslovje, Koro\v ska 160, SI-2000 Maribor, Slovenia \\ and  
Univerza v Ljubljani, Fakulteta za matematiko in fiziko, Jadranska 19, SI-1111 Ljubljana, Slovenia}
\email{igor.klep@fmf.uni-lj.si}
\address{School of Mathematical Sciences, University College Dublin, Belfield, Dublin~4, Ireland}
\email{thomas.unger@ucd.ie}
\subjclass[2000]{Primary 11E25, 13J30; Secondary 16W10, 16R50}
\date{04 May 2009}
\keywords{central simple algebra, involution, quadratic form, ordering, trace, noncommutative polynomial}

\begin{document}

\begin{abstract}
In 1976 Procesi and Schacher developed an Artin--Schreier 
type theory for central simple algebras with involution and
conjectured that in such an algebra a totally positive element
is always a sum of hermitian squares. 
In this paper elementary counterexamples to this conjecture are constructed and cases are studied where the conjecture does hold. Also, a Positivstellensatz is established for noncommutative polynomials,
positive semidefinite on all tuples of matrices of a fixed size.
\end{abstract}
\maketitle
\vspace{-.5cm}
\begin{center}
\small
\emph{Dedicated to David W. Lewis on the occasion of his 65th birthday.}
\end{center}

\section{Introduction}

Artin's 1927 affirmative solution of  Hilbert's 17th problem (\emph{Is every nonnegative real polynomial a sum of squares of rational functions?})
arguably sparked the beginning of the field of real algebra and consequently
real algebraic geometry (cf. \cite{BCR, PD}).

Starting with Helton's seminal paper \cite{Hel}, in which he proved
that every positive semidefinite real or complex noncommutative polynomial is a
sum of hermitian squares of \emph{polynomials}, variants of Hilbert's 
17th problem in a \emph{noncommutative} setting have become a topic of current
interest with wide-ranging applications 
(e.g. in control theory, optimization, engineering, mathematical 
physics, etc.); see \cite{dOHMP} for
a nice survey. Most of these results have a functional analytic
flavour and are what Helton et al.~call \emph{dimensionfree}, that is,
they deal with evaluations of noncommutative polynomials in
matrix algebras of arbitrarily large size.

Procesi and Schacher in their 
1976 Annals of Mathematics paper \cite{PS}
introduce a notion
of orderings on central simple algebras with involution, 
prove a real Nullstellensatz, and a weak noncommutative version of
Hilbert's 17th problem. A strengthening of the latter is proposed
as a conjecture \cite[p.~404]{PS}:
\emph{In a central simple algebra with involution,
a totally positive element
is always a sum of hermitian squares.}

We explain in Section~\ref{sec5} how these results can be applied
to study 
non-dimensionfree positivity of noncommutative polynomials.
Roughly speaking, a noncommutative polynomial all of whose evaluations
in $n\times n$ matrices (for \emph{fixed} $n$) are positive semidefinite,
is a sum of hermitian squares with denominators and weights.

A brief outline of the rest of the paper is as follows: in
Section~\ref{sec:PSc} we fix terminology and summarize some of the
Procesi--Schacher results in a modern
language. Then in Section~\ref{sec:counter} we present counterexamples
to the Procesi--Schacher conjecture, while Section~\ref{para.pos.rel}
contains a study of examples (mainly in the split case) where the conjecture
is true.

For general background on central simple algebras with involution we refer the reader to \cite{BOI} and for the theory of quadratic forms over fields we refer to \cite{Lam}.

\section{The Procesi--Schacher Conjecture}\label{sec:PSc}

Let $F$ be a formally real field and let $A$ be a central simple algebra with involution $\s$ and centre $K$. Assume that $F$ is the fixed field of $\s$ (i.e., $\s|_F=\mathrm{id}_F$). The involution $\s$ is of the \emph{first kind} if  $K=F$, and  of the \emph{second kind} (also called \emph{unitary}) otherwise. In this case $[K:F]=2$ and $\s|_K$ is the non-trivial element in $\mathrm{Gal}(K/F)$.

Let $D$ be a division algebra over $K$ with involution $\tau$ and fixed field $F$. Let $h$ be an $n$-dimensional hermitian or skew-hermitian form over $(D,\tau)$. Then $h$ gives rise to an involution on $M_n(D)$,  the \emph{adjoint involution} $\ad_h$, defined by
\[\ad_h(X)= H\cdot \tau(X)^t\cdot H^{-1},\] 
for all $X\in M_n(D)$, where $H$ is the Gram matrix of $h$, $t$ denotes the transpose map on $M_n(D)$ and $\tau(X)$ signifies applying $\tau$ to the entries of $X$. It is well-known that every central simple algebra with involution $(A,\s)$ is of the form $(M_n(D),\ad_h)$, where $n$ is unique, $D$ is unique up to isomorphism and $h$ is unique up to multiplicative equivalence (see \cite[4.A]{BOI}).

If $\s$ is of the first kind, then $\s$ is called \emph{orthogonal} or \emph{symplectic} if $\s$ becomes adjoint to a quadratic or alternating form, respectively, after scalar extension to a \emph{splitting field} of $A$ (i.e., an extension field $L$ of $K$ such that $A\ox_K L\cong M_n(L)$).
We denote the subspace of $\s$-symmetric elements of $A$ by $\Sym(A,\s)$.

Let $\leq$ be an ordering on $F$. We identify $\leq$ with its \emph{positive cone} $P=\{x\in F\,|\, 0\leq x\}$ via
\[x\leq y \iff y-x\in P\]
for all  $x,y\in F$.
In this case we also write $\leq_P$ instead of $\leq$.

Procesi and Schacher \cite[\S 1]{PS} consider  central simple algebras $A$, equipped with a \emph{positive involution} $\s$, i.e., an involution whose \emph{involution trace form}  $T_\s$ is positive semidefinite with respect to the ordering $\leq_P$ on $F$, 
\[T_\s(x):=\Trd(\sigma(x)x)\geq_P 0 \quad\text{for all } x\in A.\]
Here $\Trd:A\to F$ (the \emph{trace}) denotes the reduced trace $\Trd_{A/F}$ if $\s$ is of the first kind and the composition $\Trd_{K/F}\circ \Trd_{A/K}$ if $\s$ is of the second kind. 
The form $T_\s$ is a nonsingular quadratic form over $F$, cf. \cite[\S11]{BOI}.
If $\dim_K A=n$, then $\dim T_\s=n$ if $\s$ is of the first kind and $\dim T_\s=2n$ if $\s$ is of the second kind.

\begin{remark} The notion of positive involution seems to have been considered first by Weil in his groundbreaking paper \cite{Wei}. Lewis and Tignol \cite{LT} define the signature of an involution $\s$ of the first kind on $A$ with respect to the ordering $\leq_P$ on $F$ by $\sign_P\s:=\sqrt{\sign_P T_\s}$. (Qu\'eguiner \cite{Que} deals with involutions of the second kind.) It is now clear that the involution $\s$ is positive with respect to $\leq_P$  if and only if its signature with respect to $\leq_P$ is maximal.
\end{remark}

Procesi and Schacher also define a notion of \emph{positive elements} in $(A,\s)$, cf. \cite[\S V]{PS}. For greater clarity we have adapted their definitions as follows:

\begin{defi}\mbox{}
\begin{enumerate}[\rm (1)]
\item
An ordering $\leq_P$ of $F$ is called a {\it $\sigma$-ordering} if it makes the
involution $\sigma$ positive, i.e., if 
\[0\leq_P \Trd(\sigma(x)x) \quad \text{for all }x\in A.\]
\item
Suppose $\leq_P$ is a $\sigma$-ordering on $F$. An element $a\in \Sym(A,\s)$ is
called $\s$-\emph{positive with respect to $\leq_P$} if the quadratic form
$ \Trd(\sigma(x)ax)$
is positive semidefinite with respect to $\leq_P$. That is, if
\[0\leq_P\Trd(\sigma(x)ax)\quad \text{for all } x\in A.\]
\item
An element $a\in \Sym(A,\s)$ is called \emph{totally $\s$-positive} if it is positive
with respect to {all} $\sigma$-orderings on $F$.
\end{enumerate}
\end{defi}

Elements of the form $\s(x)x$ with $x\in A$ are called \emph{hermitian squares}. The set of hermitian squares of $A$ is clearly a subset of $\Sym(A,\s)$. It is also clear that the hermitian squares of $K$ are all in $F$. 

\begin{exam}\label{ex2.2}  Sums of hermitian squares and sums of traces of hermitian squares are
examples of totally $\s$-positive elements, as easy verifications will show.
\end{exam}

One of the main results in \cite{PS} explains that these are essentially
the only examples.
It can be considered as a noncommutative analogue of Artin's
solution to Hilbert's 17th problem:

\begin{theorem}{\cite[Theorem~5.4]{PS}}
\label{thm:NCh17}
Let $A$ be a central simple algebra with involution~$\s$, centre
$K$ and fixed field $F$.
Let $\alpha_1,\ldots,\alpha_m\in F$ be elements appearing in 
a diagonalization of the quadratic form $\Trd(\sigma(x)x)$. Then
for $a\in\Sym (A,\sigma)$ the following statements are equivalent:
\begin{enumerate}[\rm (i)]
\item
$a$ is totally $\s$-positive;
\item
there exist $x_{i,\ve}\in A$ with
$$
a=\sum_{\ve\in \{0,1\}^m} \alpha^\ve \sum_{i} \sigma(x_{i,\ve}) 
x_{i,\ve}.
$$
$($As usual, $\alpha^\ve$ denotes $\alpha_1^{\ve_1}\cdots \alpha_m^{\ve_m}$.$)$
\end{enumerate}
\end{theorem}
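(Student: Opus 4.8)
The plan is to reduce the theorem to a commutative statement about the trace form and its associated quadratic forms, and then invoke the classical solution of Hilbert's 17th problem over $F$ in the Artin--Pfister form with preorderings. Throughout, write $T := T_\s$ for the involution trace form $\Trd(\s(x)x)$ and, for $a \in \Sym(A,\s)$, write $T_a$ for the quadratic form $x \mapsto \Trd(\s(x)ax)$ on $A$ (viewed as an $F$-vector space). The implication (ii) $\implies$ (i) is exactly Example~\ref{ex2.2}, so the content is (i) $\implies$ (ii). The key observation is that the $\s$-orderings on $F$ are precisely the orderings $P$ of $F$ with respect to which $T$ is positive semidefinite; equivalently (since $T$ is nonsingular), those $P$ for which $\sign_P T = \dim T$. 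Because $\alpha_1,\dots,\alpha_m$ are the diagonal entries of $T$, an ordering $P$ makes every $\alpha_i$ positive if and only if $\sign_P T = \dim T$, i.e.\ $P$ is a $\s$-ordering. Hence: \emph{the $\s$-orderings are exactly the orderings of $F$ containing all the $\alpha_i$}, and $a$ is totally $\s$-positive iff $T_a$ is positive semidefinite at every such ordering.

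**Next I would** move from the quadratic form $T_a$ to a scalar element of $F$. The idea is that positivity of $T_a$ at $P$ can be detected coefficient-wise once one diagonalizes, but more efficiently one uses that $T_a \perp \langle\text{stuff}\rangle$ behaves multiplicatively; the cleanest route is to observe that $T_a$ represents, over the function field or after suitable base change, the element $a$ in a way that lets one apply the form-theoretic Hilbert 17th problem. Concretely, the target identity $a = \sum_{\ve} \alpha^\ve \sum_i \s(x_{i,\ve}) x_{i,\ve}$ says that $a$ lies in the subgroup of $\Sym(A,\s)$ generated by products $\alpha^\ve \cdot (\text{hermitian square})$. One shows this set is closed under addition (clear) and that it is exactly the set cut out by positivity at all $\s$-orderings, by a Positivstellensatz-type argument: if $a$ is \emph{not} in this set, separate it from the set by an ordering (a Zorn's lemma / maximal preordering argument in the commutative coordinate ring, exactly as in the proof of Theorem~\ref{thm:NCh17} which we are allowed to cite), getting a $\s$-ordering $P$ at which $T_a$ fails to be positive semidefinite — a contradiction.

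**In more detail**, I would set up the preordering $\Pi$ of $F$ generated by the $\alpha_i$ and the squares, so that $\Pi = \bigcap\{P : P \text{ a } \s\text{-ordering}\}$ by the first paragraph together with Artin--Schreier. Then I would show that the set $S$ of elements of $\Sym(A,\s)$ of the required form is a ``$\Pi$-module'' in $A$ closed under $x \mapsto \s(x)(-)x$, and argue that if $a \notin S$ then $\{$finite sums $s - \lambda a : s \in S, \lambda \in \Pi\}$ is a proper $\Pi$-cone; extending it to a maximal one and intersecting with $F$ produces an ordering of $F$ that contains all $\alpha_i$ (hence is a $\s$-ordering) yet makes $-a$ ``positive'' in the appropriate trace-form sense, contradicting that $a$ is totally $\s$-positive. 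This is structurally the same mechanism as Procesi and Schacher's proof of their Theorem~5.4, so in effect the statement follows by unwinding their argument and keeping track of which $\alpha_i$ appear; alternatively one cites \cite[Theorem~5.4]{PS} and only has to verify that the $\alpha_i$ in the statement (the diagonal entries of $T_\s$) can be taken as the weights.

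**The main obstacle** I anticipate is the passage between the noncommutative object $a \in \Sym(A,\s)$ and a genuinely commutative Positivstellensatz input: one must be careful that ``$T_a$ positive semidefinite at every $\s$-ordering'' really is equivalent to membership in the additive monoid generated by $\alpha^\ve \s(x)x$, and not merely to a weaker statement with extra denominators or with a larger set of weights. Handling denominators (the appearance of hermitian squares in the \emph{denominator}, familiar from Hilbert 17) and ensuring that clearing them lands one back inside $A$ rather than a matrix algebra over $A$ is the delicate point; here one exploits that $\s(y)y$ with $y \in A^\times$ is a unit in $\Sym(A,\s)$ up to the symmetrization, so a denominator can be absorbed. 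The rest — closure of $S$ under addition, the Zorn's lemma extension, and the identification of $\s$-orderings with orderings containing the $\alpha_i$ — is routine given the groundwork in \cite{PS} that we are free to quote.
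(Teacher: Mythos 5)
First, a point of calibration: the paper does not prove this statement at all --- it is quoted verbatim from Procesi--Schacher as \cite[Theorem~5.4]{PS}. So your fallback option (``alternatively one cites [PS, Theorem~5.4]'') is precisely what the paper does, and there is nothing left to verify about the weights: the $\alpha_i$ in [PS] are already the diagonal entries of $\Trd(\sigma(x)x)$. If you intend the citation as the proof, you are done.

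As a standalone argument, however, your sketch has a genuine gap at its central step. Your separation argument produces, from $a\notin S$, a maximal proper cone $Q\subseteq\Sym(A,\s)$ containing $S$ but not $a$, whose restriction $P=Q\cap F$ is an ordering containing all the $\alpha_i$, hence a $\s$-ordering. To contradict total $\s$-positivity you must then show that the \emph{quadratic form} $\Trd(\s(x)ax)$ fails to be positive semidefinite at $P$. But ``$a$ is excluded from the maximal cone over $P$'' and ``$T_a$ is not PSD at $P$'' are a priori different conditions, and the implication you need ($T_a$ PSD at $P$ $\Rightarrow$ $a\in Q$) is essentially the theorem itself localized at $P$; as sketched, the argument is circular. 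In [PS] this bridge is supplied by their classification theorem (Theorem~1.2): over the real closure of $(F,P)$ the algebra with its positive involution becomes matrices with (conjugate) transpose, where cone-positivity, positive semidefiniteness of $T_a$, and being a sum of hermitian squares visibly coincide (this is exactly the device reused in Example~\ref{ex:psd} and Lemma~\ref{lem:uf} of the present paper); one then descends to $F$ via the field-level Artin--Schreier theorem applied to the diagonal entries of $T_a$. Relatedly, your proposed cure for denominators --- absorbing $\s(y)y$ because it is ``a unit up to symmetrization'' --- only works when the denominator is a single \emph{invertible} hermitian square (then $\s(y^{-1})Sy^{-1}\subseteq S$); a general Positivstellensatz denominator is a sum of weighted hermitian squares and need not be invertible when $A$ is not a division algebra. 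The true reason no denominators appear in (ii) is not an absorption trick but that the argument ultimately reduces to Artin--Schreier over the field $F$, where $\beta=\beta^2\cdot\beta^{-1}$ disposes of denominators --- a luxury unavailable over polynomial rings, which is why the non-dimensionfree Positivstellensatz (Theorem~\ref{thm:positiv}) does carry a denominator $h$.
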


In the case $n=\deg A=2$, the weights $\alpha_j$ are superfluous (we will come back to this later).
Procesi and Schacher \cite[p.~404]{PS} conjecture that this is also the case for $n>2$:

\begin{conj}
\label{open:PS}
In a central simple algebra $A$ with involution $\s$,
every
totally $\s$-positive element is a sum of hermitian squares. \textup{(}Equivalently:
the trace of a
hermitian square is always a sum of hermitian squares.\textup{)}
\end{conj}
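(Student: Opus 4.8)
Since the abstract already announces that \emph{counterexamples} to this conjecture will be constructed, my ``proof'' proposal is really a disproof proposal: I would hunt for a central simple algebra with involution $(A,\sigma)$ over a formally real field $F$ together with a concrete $a\in\Sym(A,\sigma)$ that is totally $\sigma$-positive but not a sum of hermitian squares. The cheapest supply of totally $\sigma$-positive elements is Example~\ref{ex2.2}: every trace of a hermitian square $a=\Trd(\sigma(x)x)\in F$ qualifies automatically, and by the ``equivalently'' clause of the conjecture it is enough to make one such scalar fail to be a sum of hermitian squares. So the whole problem reduces to comparing, inside $\Sym(A,\sigma)$, the cone $\Sigma$ of sums of hermitian squares with the a priori larger cone of totally $\sigma$-positive elements, which by Theorem~\ref{thm:NCh17} is generated by $\Sigma$ together with the \emph{weighted} hermitian squares $\alpha^\varepsilon\sigma(x)x$, $\varepsilon\in\{0,1\}^m$, where the $\alpha_i$ diagonalize $T_\sigma$; the conjecture fails exactly when some weight $\alpha^\varepsilon$ is genuinely irredundant.

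Concretely I would take $D$ a quaternion division algebra $\left(\frac{c,d}{F}\right)$ carrying an involution $\tau$ of the first kind (the canonical one, or an orthogonal $\Int(u)\circ\bbar$), and $A=M_n(D)$ with $\sigma=\ad_h$ for a suitable (skew-)hermitian $h$. The first step is to pin down which orderings of $F$ are $\sigma$-orderings: by the Lewis--Tignol description recalled in the Remark, $P$ is a $\sigma$-ordering iff $\sign_P T_\sigma$ is maximal, and $T_\sigma$ is an explicit scaled tensor product of $\langle\langle -c,-d\rangle\rangle$ with the Gram form of $h$, so this amounts to sign conditions on $c$, $d$ and the entries of $H$ that I can impose at will by choosing $F$ (an iterated rational function field, or a suitable number field) with enough orderings. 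The second step is to translate ``sum of hermitian squares in $D$'' into quadratic form language: such elements are precisely the nonzero values represented by finitely many copies of the hermitian-square form of $(D,\tau)$ (a scaled $2$-fold Pfister form), so I must exhibit a totally $\sigma$-positive $a\in F$ lying outside $\sum D_F\bigl(\langle\langle -c,-d\rangle\rangle\bigr)$. For the obstruction I would specialize the function-field variables (or pass to a completion) to a place at which this Pfister form becomes anisotropic and misses the residue of $a$, while checking that this place does \emph{not} refine to a $\sigma$-ordering --- so that $a$ genuinely stays positive at every $\sigma$-ordering.

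The main obstacle is exactly this balancing act. Total $\sigma$-positivity forces $a$ to be positive at a large family of orderings (all the $\sigma$-orderings), which heavily constrains $a$ and tends to push it into the Pfister value set; to keep it out, I need a ``hidden'' obstruction --- an ordering or finite place of $F$ invisible to $T_\sigma$ (not a $\sigma$-ordering) that nonetheless obstructs representability by sums of hermitian squares. Engineering $(D,\tau)$ and $F$ so that these two demands are simultaneously met, and doing it with completely elementary ingredients, is the real content; once the example is in hand, verifying total $\sigma$-positivity is a finite signature computation and verifying non-representability is a local computation over the residue field. (In the opposite, affirmative direction --- the situations studied afterwards where the conjecture \emph{does} hold, such as the split case $A=M_n(F)$ with the transpose involution, or $\deg A=2$ --- one instead argues that $\Sym(A,\sigma)$ is structured enough that a totally $\sigma$-positive element is literally a positive semidefinite symmetric matrix over a real closure, then descends a Gram decomposition back to $F$ via Artin's theorem, at which point the weights $\alpha^\varepsilon$ provably disappear.)
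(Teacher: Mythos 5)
You have read the situation correctly: the conjecture is false, and the paper disproves it by counterexample with exactly the architecture you describe --- a rational function field $F$ with enough orderings, an adjoint involution whose $\sigma$-orderings are cut out by sign conditions on the diagonalization of the underlying form, a scalar forced positive at all of them, and a valuation-theoretic obstruction (invisible to the orderings) to writing that scalar as a sum of hermitian squares. Your identification of the ``hidden obstruction'' as the crux is also right; the paper's Remark~3.5 makes it precise via proper semiorderings. But your proposal stops exactly where the content begins, and it overcomplicates the setup: no division algebra is needed. The paper's first counterexample is split: $F=F_0(X,Y)$, $A=M_3(F)$, $\sigma=\ad_q$ with $q=\langle X,Y,XY\rangle$, and $a=XY$. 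Then $T_\sigma\simeq q\otimes q$ has signature $1$ or $9$ at each ordering, so the $\sigma$-orderings are exactly those with $X,Y>0$, whence $XY>0$ at every one of them; moreover $XY=\Trd(\sigma(b)b)$ for an explicit $b$, so the example also refutes the ``equivalently'' clause, as you intended.

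Two concrete steps are missing from your plan. First, the translation of ``$XY\cdot I_3$ is a sum of hermitian squares'' into quadratic-form language is not the value set of a Pfister form on a quaternion algebra; it is obtained by reading off a single diagonal entry of $\sigma(a)a=H a^t H^{-1}a$: the $(3,3)$-entry is $Ya_{13}^2+Xa_{23}^2+a_{33}^2$, so an expression of $XY$ as a sum of hermitian squares forces $XY=Ys_1+Xs_2+s_3$ with each $s_i$ a sum of squares, i.e.\ $1$ is weakly represented by $\langle X^{-1},Y^{-1},X^{-1}Y^{-1}\rangle\simeq\langle X,Y,XY\rangle$. (Only this necessary condition is needed, not a characterization of the full cone of sums of hermitian squares.) Second, the obstruction itself is Lemma~3.1: $\langle X,Y,XY\rangle$ does not weakly represent $1$ over $F_0(X,Y)$, proved by embedding into the iterated Laurent series field $F_0(\!(X)\!)(\!(Y)\!)$ and applying Springer's theorem repeatedly to $\langle 1\rangle\perp m\times\langle -X,-Y,-XY\rangle$. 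Your quaternionic variant corresponds to the paper's second counterexample ($M_3(H)$ with the symplectic involution $\ad_q\otimes\gamma$), and it reduces to the very same lemma because $N_H=\langle 1,1,1,1\rangle$ turns the quaternionic $(3,3)$-entry into an identity of the same shape. Without the diagonal-entry computation and Lemma~3.1, however, the ``balancing act'' you flag as the real difficulty is not actually resolved by your proposal.
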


\begin{remark}\label{rem2.4} The two statements in the PS Conjecture are indeed equivalent: the necessary direction follows from the fact that traces of hermitian squares are totally $\s$-positive, as observed in Example~\ref{ex2.2}.

For the sufficient direction, assume that the trace of a
hermitian square is always a sum of hermitian squares.
Let $a\in \Sym(A,\s)$ be totally $\s$-positive. Then $a$ can be expressed in terms of the entries in a diagonalization of the form $\Trd(\s(x)x)$ as in Theorem~\ref{thm:NCh17}(ii). 
Let $\beta$ be such an entry. 
Thus, $\beta=\Trd(\s(y)y)$ for some $y\in A$. By the assumption
there are $x_1,\ldots, x_\ell\in A$ such that $\beta=\sum_i \s(x_i)x_i$. Since $\beta\in F$, the expression in Theorem~\ref{thm:NCh17}(ii) can now be rewritten as a sum of hermitian squares.
\end{remark}

As mentioned a few lines earlier, Procesi and Schacher provide supporting evidence for their conjecture for the case $\deg A=2$. Another  case where the PS~Conjecture is true has been well-known since the 1970s:

\begin{exam}\label{ex:psd}
Let $A$ be the full matrix ring $M_n(F)$ over a formally
real field
$F$ endowed with the transpose involution $\s=t$. 
Since $\Trd=\tr$, every ordering of $F$ is a $\sigma$-ordering. 
We claim that $a\in\Sym(A,\sigma)$ is totally $\s$-positive if and only if 
$a$ is a positive semidefinite matrix in $A\otimes_F R 
=M_n(R)$
for any real closed field $R$ containing $F$ (equivalently:
for any real closure of $F$). 

Indeed, if $a$ is totally $\s$-positive, then for all $x\in A$, 
$\tr(x^\T ax)$ is positive with respect to every ($\sigma$-)ordering
of $F$, i.e., $\tr(x^\T ax)\in\sos F$.
A diagonalization of the quadratic form $\tr(x^\T ax)$ will
 contain only sums of squares in $F$ (as it would otherwise violate
the total $\s$-positivity).
Hence this quadratic form remains positive semidefinite under every
ordered field extension of $F$.

The converse implication is also easy: if $a$ is positive semidefinite over
$M_n(R)$ for every real closed field $R\supseteq F$, then 
the trace of $x^\T ax$ for $x\in A$ 
is nonnegative under the ordering of $R$
and hence under all orderings of $F$. By definition, this means that
$a$ is totally $\s$-positive.

Moreover,
every totally $\s$-positive element of $(A,\sigma)$ 
is a sum of hermitian squares.
Essentially, this goes back to Gondard and Ribenboim \cite{GR}
and has been reproved several times \cite{Djo,FRS,HN,KS}.
It also follows easily from
Theorem~\ref{thm:NCh17} for it
suffices to show that the trace of a hermitian square is a sum of
hermitian squares. But this is clear: if $a=
\big[
a_{ij}
\big]
_{1\leq i,j\leq n}\in A$, then
\[\Trd(\sigma(a)a)=\sum_{i,j=1}^n a_{ij}^2\]
is obviously a sum of (hermitian) squares in $F$.

The reader will have no problems extending this example to the case
$K=F(\sqrt{-1})$ and $A=M_n(K)$ endowed with the conjugate transpose 
involution $\bbar t$.
\end{exam}

\section{The Counterexamples}
\label{sec:counter}

When the transpose involution in the previous example is replaced by an arbitrary orthogonal involution $\s$ on $M_n(F)$ (i.e., an involution which is adjoint to a quadratic form over $F$), the equivalence between totally $\s$-positive elements and sums of hermitian squares is in general no longer true, as we proceed to show in this section. We assume throughout that $F_0$ is a formally real field.

\begin{lemma}\label{lem3.1} Let $F=F_0(\!(X)\!)(\!(Y)\!)$, the iterated Laurent series field in two commuting variables $X$ and $Y$. The quadratic form
\[q=\<X,Y,XY\>\]
does not weakly represent $1$ over $F$. In fact this is already true over the rational function field $F_0(X,Y)$.
\end{lemma}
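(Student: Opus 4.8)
The statement to prove is that the quadratic form $q = \langle X, Y, XY\rangle$ does not weakly represent $1$ over $F_0(X,Y)$ (and a fortiori over the iterated Laurent series field $F = F_0(\!(X)\!)(\!(Y)\!)$, since $F_0(X,Y)$ embeds in $F$ compatibly with the valuation, so a weak representation of $1$ upstairs would restrict to one downstairs — actually the implication goes the other way, so one proves the harder statement over $F_0(X,Y)$ and the Laurent series case follows by the embedding $F_0(X,Y) \hookrightarrow F$). Recall that $q$ weakly represents $1$ means there exists a nonzero $s \in \sum F^2$ such that $q$ represents $s$, equivalently $q \otimes \langle -s \rangle$ is isotropic for some nonzero sum of squares $s$; equivalently, $1$ is a value of $q$ over some real closure, i.e. $q$ represents an element $> 0$ at every ordering. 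So the plan is to exhibit an ordering of $F_0(X,Y)$ at which every nonzero element represented by $q$ is negative.

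The main step is to choose the ordering carefully using the valuation-theoretic structure. I would place an ordering on $F_0(X,Y)$ in which $0 < X$, $0 < Y$, and both $X$ and $Y$ are infinitesimally small and "$Y$ is much smaller than every power of $X$" — concretely, take the $(X,Y)$-adic style valuation $v$ with value group $\mathbb{Z}^2$ ordered lexicographically, $v(X) = (1,0)$, $v(Y) = (0,1)$, residue field $F_0$ ordered arbitrarily, and lift this to an ordering making $X, Y$ positive. (Equivalently, iterate: first the $Y$-adic ordering on $F_0(X)(\!(Y)\!)$ with $Y > 0$ infinitesimal, then the $X$-adic ordering on $F_0(\!(X)\!)$ with $X > 0$ infinitesimal.) Under such an ordering, the three entries $X$, $Y$, $XY$ have pairwise distinct values $(1,0)$, $(0,1)$, $(1,1)$ — crucially all three lie in $(\text{value group})$-classes that are inequivalent mod $2(\text{value group})$ except one must check the combinatorics: $(1,0), (0,1), (1,1)$ are distinct modulo $2\mathbb{Z}^2$. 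Then for any $\alpha X + \beta Y + \gamma XY$ with $\alpha,\beta,\gamma$ sums of squares in $F_0(X,Y)$, not all zero, the valuation is determined by the term of smallest value, which is an odd element of the value group and hence the element cannot be a square, let alone a sum of squares, let alone positive if I arrange the residue-level sign to be negative — wait, more directly: I want to show $q$ cannot represent a positive element. A value $q(u,v,w)$ with $u,v,w \in F_0(X,Y)$ is $X u^2 + Y v^2 + XY w^2$; analyze its valuation: the three summands have valuations $v(X)+2v(u)$, $v(Y)+2v(v)$, $v(XY)+2v(w)$, lying in the three distinct cosets $(1,0), (0,1), (1,1)$ of $2\mathbb{Z}^2$ in $\mathbb{Z}^2$, so no cancellation of leading terms between summands of different cosets; the overall valuation is the min, which lies in one of these three nonzero cosets, so is a nonzero element of $\mathbb{Z}^2$ — in particular the value has odd first or second coordinate. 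An element of $F_0(X,Y)$ with such a valuation, when you look at its leading coefficient... here I use that $F_0$ is the residue field at each stage and the sign at the ordering is read off from the leading coefficient, which for $X$, $Y$, $XY$ themselves is $1$; but the point is subtler — I need to conclude the element is \emph{negative} at the chosen ordering, or handle signs.

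The cleaner route, which I would actually carry out: use the well-known fact (from the theory of orderings and valuations, cf. the structure of orderings on $F_0(\!(X)\!)(\!(Y)\!)$) that $F_0(\!(X)\!)(\!(Y)\!)$ has exactly four orderings over each ordering of $F_0$, determined by the signs of $X$ and $Y$, and that under the ordering with $X,Y$ both positive and $Y$ infinitesimal relative to $X$, an element is positive iff the lowest-degree coefficient in its Laurent expansion (ordered lexicographically in $Y$ then $X$) is positive in $F_0$. Now take the ordering with $X > 0$, $Y < 0$. Then $X > 0$, $Y < 0$, $XY < 0$. A nonzero value $X u^2 + Y v^2 + XY w^2$: by the coset argument above the leading terms do not cancel, so the sign of the value equals the sign of whichever summand has least valuation. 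If it is the $Xu^2$ summand, the value is positive; so I cannot immediately kill representation of positive elements with a single ordering — I need to be cleverer, or use that weak representation of $1$ requires $q$ to represent a positive element at \emph{every} ordering simultaneously, and pick the ordering depending on... no, weak representation means one element $s$ works globally.

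Let me restate the correct strategy: $q$ weakly represents $1$ iff $q$ represents $1$ in $F_P$-terms at every ordering $P$ (by the local-global / Pfister theory: $1 \in D_F(q) \cdot \sum F^2$ iff $1 \in D_{F_P}(q)$ for all orderings $P$, where $F_P$ is the real closure). So to \emph{refute} weak representation of $1$, it suffices to produce \emph{one} ordering $P$ such that $1 \notin D_{F_P}(q)$, i.e. $q$ does not represent $1$ over the real closure $F_P$. Over a real closed field, $q = \langle X, Y, XY \rangle$ represents $1$ iff $1 \in \{X u^2 + Y v^2 + XY w^2\}$, and over a real closure of $F_0(X,Y)$ at a suitable ordering this reduces — via the valuation with value group $\mathbb{Z}^2$ and residue field a real closure of $F_0$ — to: is $(0,0)$ in the image of $\{(1,0) + 2\mathbb{Z}^2\} \cup \{(0,1) + 2\mathbb{Z}^2\} \cup \{(1,1) + 2\mathbb{Z}^2\}$ at the valuation level, allowing cancellation only within a single coset? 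Since $1$ has valuation $(0,0)$, which is not in any of the three cosets $(1,0), (0,1), (1,1)$ modulo $2\mathbb{Z}^2$, and since terms in distinct cosets cannot cancel (their valuations differ), the only way to get valuation $(0,0)$ is if two summands in the \emph{same} coset combine — but the three summands lie in three \emph{distinct} cosets, so no two share a coset, so cancellation giving valuation $(0,0)$ is impossible. Hence $1$ is not represented. Therefore the hard part is purely the bookkeeping: verifying that $(1,0)$, $(0,1)$, $(1,1)$ are pairwise distinct in $\mathbb{Z}^2/2\mathbb{Z}^2 \cong (\mathbb{Z}/2)^2$ and all nonzero, which is immediate, and carefully justifying the "no cancellation across distinct cosets" valuation estimate in a henselian valued field with value group $\mathbb{Z}^2$ — this is where the argument should be spelled out, using that $F_0(\!(X)\!)(\!(Y)\!)$ is henselian (complete, even) with the stated value group, and that $F_0(X,Y)$ sits inside it; the refutation over $F_0(\!(X)\!)(\!(Y)\!)$ then descends, or rather is implied by the refutation over the smaller field, and we get both simultaneously. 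The main obstacle, then, is just correctly setting up the compatible ordering/valuation and stating the elementary valuation inequality in a form that obviously forbids representing an element of trivial value.
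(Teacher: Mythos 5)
Your final strategy rests on the claim that $q$ weakly represents $1$ if and only if $q$ represents $1$ over the real closure $F_P$ for every ordering $P$, so that it would suffice to exhibit one ordering where representation fails. That biconditional is false, and this lemma is precisely a counterexample to it: over a real closed field the entries $X$, $Y$, $XY$ cannot all be negative (the third is the product of the first two, up to squares), so $\langle X,Y,XY\rangle$ represents $1$ over \emph{every} real closure of $F_0(X,Y)$. Hence there is no ordering $P$ with $1\notin D_{F_P}(q)$, and the plan cannot be executed. Only the forward implication of your "iff" holds; the converse needs the field to be SAP, and the whole point here (see the closing remark of Section 3, which attributes the phenomenon to a proper semiordering) is that $F_0(X,Y)$ is not such a field. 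Relatedly, your coset computation is carried out "over a real closure \dots via the valuation with value group $\mathbb{Z}^2$", but a real closed field has divisible value group: in $F_P$ one of $\sqrt{X}$, $\sqrt{-X}$ exists, the cosets modulo $2\mathbb{Z}^2$ disappear, and indeed $X\cdot(1/\sqrt{X})^2=1$ whenever $X>_P 0$.

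The coset argument itself is the right idea, but its correct home is the henselian field $F=F_0(\!(X)\!)(\!(Y)\!)$ itself, with value group $\mathbb{Z}\times\mathbb{Z}$ ordered lexicographically. There one checks, iterating over the two Laurent variables and using that $F_0$ and $F_0(\!(X)\!)$ are formally real (so the leading coefficients of a sum of squares cannot cancel), that every nonzero sum of squares has value in $2(\mathbb{Z}\times\mathbb{Z})$; the nonzero ones among $Xu$, $Yv$, $XYw$ then have values in distinct nonzero cosets of $2(\mathbb{Z}\times\mathbb{Z})$, so the value of their sum is the minimum of their values and in particular is not $(0,0)=v(1)$. This is exactly the paper's "repeated application of Springer's theorem." Finally, your reduction between the two fields is reversed: one proves the (stronger) statement over the larger field $F$, and the statement over $F_0(X,Y)$ follows because any weak representation of $1$ over the subfield would persist in $F$; non-representation over the smaller field does not by itself transfer upward.
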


\begin{proof} Assume for the sake of contradiction that 
$m\x q$ represents $1$ for some positive integer $m$. 
Then the form
\[\vf:=\<1\>\perp m\x \<-X,-Y,-XY\>\]
is isotropic over $F$. 
This leads to a contradiction by repeated application of Springer's theorem on fields which are complete with respect to a discrete valuation, cf. \cite[Chapter VI, \S1]{Lam}. 
Since $F_0(X,Y)$ embeds into $F$ the proof is finished.
\end{proof}

\begin{theorem}\label{thm3.2} Let $F=F_0(X, Y)$. Let $A=M_3(F)$ and $\s=\ad_q$, where
\[q=\<X,Y,XY\>.\]
The \textup{(}$\s$-symmetric\textup{)} element $XY$ is totally $\s$-positive, but is not a sum of hermitian squares in $(A,\s)$.
\end{theorem}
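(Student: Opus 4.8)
The plan is to establish the two assertions separately. For total $\s$-positivity of $a = XY$, I would invoke the structure of the adjoint involution: writing $H = \diag(X,Y,XY)$ for the Gram matrix of $q$, we have $\s(Z) = H^{-1} Z^t H$, and the involution trace form is $T_\s(Z) = \tr(H^{-1}Z^t H Z)$. More directly, the quadratic form $Z \mapsto \Trd(\s(Z)\,a\,Z)$ on $M_3(F)$ needs to be shown positive semidefinite with respect to every $\s$-ordering of $F$. The key observation is that $a = XY$ is a \emph{square} times a unit in a controlled way — in fact, since $a$ itself equals the third diagonal entry of $H$, one computes $\Trd(\s(Z) a Z) = \tr(H^{-1} Z^t H \cdot a Z)$ and recognizes it, after the substitution absorbing $H$, as a form whose diagonalization involves only products of the entries of $H$, i.e. the "weights" $X, Y, XY$. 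I would argue that with respect to any $\s$-ordering these weights are forced to be positive (a $\s$-ordering makes $T_\s$ positive semidefinite, and $T_\s$ is Witt-equivalent to a form built from $\langle X, Y, XY\rangle$ and its Pfister-type companions; concretely one reads off that a $\s$-ordering must have $X >_P 0$, $Y>_P 0$, hence $XY>_P 0$), so $a = XY$ is positive in every $\s$-ordering and the trace form $\Trd(\s(Z)aZ)$, having a diagonalization supported on products of positive elements, is positive semidefinite. The cleanest route is probably: $\Trd(\s(Z)\cdot XY\cdot Z)$ diagonalizes as a sum of terms of the form $(\text{unit})\cdot(\text{square})$ where each unit is a monomial in $X,Y$ that is a product of an even number of the weights $X,Y,XY$ up to a square, hence $>_P 0$ for every $\s$-ordering.

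For the negative assertion, I would argue by contradiction using Lemma~\ref{lem3.1}. Suppose $XY = \sum_j \s(Z_j) Z_j$ in $M_3(F)$. Apply the reduced trace: $\Trd_{A/F}(XY \cdot I) = 3XY$ on one side — wait, rather, multiply on the left by a well-chosen rank-one symmetric element and take traces, or more simply evaluate the defining quadratic form. The efficient approach: from $a = \sum_j \s(Z_j)Z_j$, for any column vector $v \in F^3$ we get $v^t H a v = \sum_j v^t H \s(Z_j) Z_j v = \sum_j (Z_j v)^t H (Z_j v)$, which exhibits $a\cdot(v^t H v)$-type expressions as values of the form $q$. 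Choosing $v$ to be the standard basis vectors, $v = e_i$, gives $H_{ii} a_{ii}$ on the left — with $a = XY$, $H = \diag(X,Y,XY)$, the left side for $e_3$ is $XY \cdot (XY) \cdot$(third entry of $a$)... this needs care. The real mechanism is: $a$ being a sum of hermitian squares for $\ad_q$ is equivalent, by transfer/Morita theory for hermitian forms, to the hermitian form $\langle a \rangle$ over the split algebra being "represented" by sums of hermitian squares, which translates into $q \otimes \langle a\rangle$-type conditions; unwinding, $XY$ a sum of hermitian squares in $(M_3(F), \ad_q)$ forces $q = \langle X,Y,XY\rangle$ to weakly represent $XY$ over $F$, equivalently (dividing by $XY$, which is a norm-equivalence since $\langle X,Y,XY\rangle \cong XY\cdot\langle Y, X, XY\rangle$ up to squares... actually $\langle X,Y,XY\rangle$ scaled by $XY$ is $\langle XY^2\cdot X^{-1}... \rangle$) that $q$ weakly represents $1$, contradicting Lemma~\ref{lem3.1}.

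The bridge I would make precise is the standard fact: for $(A,\s) = (M_n(F), \ad_q)$ with $q$ nonsingular over $F$, an element $a \in \Sym(A,\s)$ is a sum of hermitian squares if and only if the $n$-dimensional hermitian (= quadratic, in the split orthogonal case) form $q_a$ determined by $a$ — explicitly $q_a(v) = v^t (Ha) v$ after symmetrizing, where $Ha$ is symmetric because $a$ is $\s$-symmetric — lies in the subsemiring of the Witt ring generated by the form $q$ together with sums of squares; more usefully, $a = \sum \s(Z_j)Z_j$ gives $Ha = \sum Z_j^t H Z_j$, so the quadratic form $q_a = \langle Ha\rangle$ (as a form on $F^n$) is a sum of forms each isometric to a subform of an orthogonal sum of copies of $q$, hence $q_a \leq k\times q$ in the sense of subforms for some $k$; by dimension count $q_a \cong k \times q \perp (\text{something})$ won't fit, so actually $q_a$ must be "dominated" by copies of $q$ — and taking $a = XY$, $q_a = \langle X\cdot XY, Y\cdot XY, XY\cdot XY\rangle = XY\langle X,Y,XY\rangle \cong \langle X,Y,XY\rangle$ (since $XY$ is a similarity factor of $q$: indeed $q = \langle X,Y,XY\rangle$ and $XY\cdot q = \langle X^2 Y, XY^2, X^2Y^2\rangle \cong \langle Y, X, 1\rangle \cdot$... hmm $\cong \langle 1, X, Y\rangle$, \emph{not} $q$ — good, this is the point). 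So $q_a \cong \langle 1, X, Y\rangle$, and $q_a$ being a subform of $k\times q = k\times\langle X,Y,XY\rangle$ forces $\langle 1,X,Y\rangle \hookrightarrow k\times\langle X,Y,XY\rangle$, in particular $k\times\langle X,Y,XY\rangle$ represents $1$, i.e. $q$ weakly represents $1$ — contradicting Lemma~\ref{lem3.1}.

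\textbf{The main obstacle} I anticipate is getting the Morita/transfer dictionary exactly right — specifically, proving cleanly the equivalence "$a$ is a sum of hermitian squares in $(M_n(F),\ad_q)$ $\iff$ the twisted form $q_a$ is a subform of some multiple of $q$," including the correct normalization of Gram matrices and the direction that lets one conclude weak representation of $1$. There are sign and scaling subtleties (hermitian squares $\s(Z)Z$ versus $Z\s(Z)$, left versus right ideals, the exact shape of $q_a$), and one must be careful that $\langle X, Y, XY\rangle$ scaled by $XY$ genuinely lands on $\langle 1, X, Y\rangle$ rather than back on $q$ — this sign of "genericity" of $X, Y$ as independent indeterminates over $F_0$ is what makes the counterexample work, and it is exactly Lemma~\ref{lem3.1} that supplies the needed non-representation. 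The positivity half should be routine once the weights $X, Y, XY$ are seen to be positive under all $\s$-orderings.
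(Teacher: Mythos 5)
Your proposal is correct and follows essentially the same route as the paper: the $\s$-orderings are exactly those with $X,Y$ (hence $XY$) positive, read off from $T_\s\simeq q\otimes q$ and the signature computation, and the assumed hermitian-square identity, evaluated at the basis vector $e_3$ (equivalently, reading off the $(3,3)$-entry of $\s(Z)Z$), shows that $q$ weakly represents $1$, contradicting Lemma~\ref{lem3.1}. The paper's version is simply more streamlined at both steps: for positivity it uses that $XY$ is central, so $\Trd(\s(Z)\,XY\,Z)=XY\,\Trd(\s(Z)Z)\geq_P 0$ at once, and for the negative part it computes the $(3,3)$-entry $Ya_{13}^2+Xa_{23}^2+a_{33}^2$ directly, which bypasses the Morita/subform dictionary you flag as the main obstacle.
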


\begin{proof} It is clear that $XY\in\Sym(A,\s)$ since $XY\in F$.

We first show that $XY$ is totally $\s$-positive. Since
$T_\s \simeq q\ox q$ (see \cite[p. 227]{Lew} or \cite[11.4]{BOI})
we have 
\[\sign_P T_\s=(\sign_P q)^2 \in \{1,9\}\] 
for any ordering $P\in X_F$. 
Hence, the set of $\s$-orderings on $F$ is not empty. It is exactly the set of $P\in X_F$ with $\sign_P T_\s=9$. (Note that $F$ has orderings for which both $X$ and $Y$, and thus $XY$, are positive so that the value $\sign_P T_\s=9$ can indeed be attained.)

Let $P$ be any $\s$-ordering on $F$. Then we have for any $a\in A$, 
\[\Trd(\s(a)a)\geq_P 0\] 
(by definition) and so for any $a\in A$,
\[\Trd\bigl(\s(a) XY a\bigr)=XY \Trd (\s(a)a)\geq_P 0,\]
since $XY\geq_P 0$ (for otherwise $\sign_P T_\s=1$ and $P$ would not be a $\s$-ordering on $F$).
Hence, $XY$ is totally $\s$-positive.
An alternative argument showing that
$XY$ is totally $\s$-positive can be given by observing that
$XY= \Trd(\s(b) b)$ for
$$b=\left[\begin{smallmatrix}
0&X&0\\
0&0&0\\
0&0&0
\end{smallmatrix}\right].
$$

Next we show that $XY$ is not a sum of hermitian squares in $(A,\s)=(M_3(F), \ad_q)$. We identify $XY$ with $XYI_3$ in $M_3(F)$, where $I_3$ denotes the $3\x 3$ identity matrix. Assume for the sake of contradiction that $XYI_3$ is a sum of elements of the form $\s(a)a$ with $a=[a_{ij}]_{1\leq i,j\leq 3} \in M_3(F)$. Recall that
\[\s(a)a =\ad_q(a)a=
\left[\begin{smallmatrix}
X & & \\
& Y & \\
& & XY
\end{smallmatrix}\right] \cdot a^t\cdot 
\left[\begin{smallmatrix}
X & & \\
& Y & \\
& & XY
\end{smallmatrix}\right]^{-1} \cdot a. \]
The $(3,3)$-entry of $\s(a)a$ is equal to
\[Ya_{13}^2+X a_{23}^2 + a_{33}^2.\]
By our assumption there are $s_1, s_2, s_3 \in \sum F^{\x 2}$ such that
\[XY=Y s_1+ Xs_2 + s_3,\]
which is equivalent with
\[1= X^{-1}s_1 + Y^{-1} s_2 + X^{-1} Y^{-1} s_3.\]
Thus, $1$ is weakly represented by the quadratic form
\[\<X^{-1}, Y^{-1}, X^{-1} Y^{-1}\> \simeq \<X, Y, XY\>=q,\]
which is impossible by Lemma~\ref{lem3.1}. This finishes the proof.
\end{proof}

The previous theorem gives us a counterexample to the PS~Conjecture. It shows that the conjecture is in general not true for full matrix algebras equipped with an orthogonal involution. In contrast, when we equip a full matrix algebra with a \emph{symplectic} involution, we will show in  Theorem~\ref{thm4.7} below that the conjecture does hold. 

Thus, we could ask if the PS~Conjecture also holds for non-split central simple algebras with symplectic involution. The answer is ``no'':

\begin{theorem}\label{thm3.3}  Let $F=F_0(X, Y)$. Let $A=M_3(F)\ox_F H\cong M_3(H)$,
where $H=(-1,-1)_F$ is Hamilton's quaternion division algebra over $F$. Equip $A$ with the involution $\s=\ad_q\ox\gamma$, where $\gamma$ is quaternion conjugation and $\s=\ad_q$ for
\[q=\<X,Y,XY\>.\]
The algebra $A$ is central simple over $F$ of degree $6$ and the involution $\s$ is symplectic.
The \textup{(}$\s$-symmetric\textup{)} element $XY$ is totally $\s$-positive, but is not a sum of hermitian squares in $(A,\s)$.
\end{theorem}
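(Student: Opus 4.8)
The plan is to imitate the proof of Theorem~\ref{thm3.2} almost verbatim, carrying the quaternionic coefficients along. First I would dispose of the structural assertions: since Hamilton's quaternion algebra $H=(-1,-1)_F$ is a division algebra (because $F$ is formally real), $A=M_3(F)\ox_F H\cong M_3(H)$ is central simple over $F$ with $\dim_F A=9\cdot 4=36$, so $\deg A=6$. The involution $\ad_q$ is orthogonal (it is adjoint to the quadratic form $q$), while $\gamma$ is the canonical involution of $H$, which is symplectic; since tensoring an orthogonal involution with a symplectic one yields a symplectic involution, $\s=\ad_q\ox\gamma$ is symplectic (cf.\ \cite{BOI}).

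Next, to see that $XY$ is totally $\s$-positive I would compute the involution trace form. Using $T_{\ad_q}\simeq q\ox q$ (as in the proof of Theorem~\ref{thm3.2}), the fact that $T_\gamma(x)=\Trd_{H/F}(\gamma(x)x)=\Trd_{H/F}\bigl(\Nrd_H(x)\bigr)=2\Nrd_H(x)$ has associated quadratic form $\<2,2,2,2\>$, and multiplicativity of the trace form under tensor products of algebras with involution, one gets $T_\s\simeq(q\ox q)\ox\<2,2,2,2\>$, hence $\sign_P T_\s=4(\sign_P q)^2\in\{4,36\}$ for every $P\in X_F$. An ordering $P$ is a $\s$-ordering exactly when $T_\s$ is positive definite at $P$, i.e.\ when $\sign_P T_\s=36=(\deg A)^2$, i.e.\ when $\sign_P q=3$, i.e.\ when $X>_P 0$ and $Y>_P 0$, and then also $XY>_P 0$. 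Such orderings exist, and for any one of them and any $a\in A$,
\[\Trd\bigl(\s(a)\,XY\,a\bigr)=XY\,\Trd(\s(a)a)\geq_P 0,\]
so $XY$ is totally $\s$-positive. (Alternatively, $XY=\tfrac12\Trd(\s(b)b)$ for $b=Xe_{12}\ox 1$, and since $\tfrac12\in\sum F^{\x 2}$ this exhibits $XY$ as totally $\s$-positive via Example~\ref{ex2.2}.)

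The crux is to show that $XY$ is not a sum of hermitian squares in $(A,\s)=(M_3(H),\ad_q\ox\gamma)$. Under the identification of $(M_3(F)\ox_F H,\ad_q\ox\gamma)$ with $(M_3(H),\ad_h)$ for the diagonal hermitian form $h=\<X,Y,XY\>$ over $(H,\gamma)$, one has $\s(a)=G\cdot\bar a^{t}\cdot G^{-1}$ with $G=\diag(X,Y,XY)$ and $\bar a^{t}$ obtained by transposing $a$ and applying $\gamma$ entrywise. A direct computation, entirely parallel to the one in the proof of Theorem~\ref{thm3.2}, shows that for $a=[a_{ij}]\in M_3(H)$ the $(3,3)$-entry of $\s(a)a$ equals
\[Y\,\Nrd_H(a_{13})+X\,\Nrd_H(a_{23})+\Nrd_H(a_{33}),\]
using $\gamma(x)x=\Nrd_H(x)$ for $x\in H$. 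The key point is that for $H=(-1,-1)_F$ every reduced norm is a sum of four squares in $F$. Hence a hypothetical identity $XY\cdot I_3=\sum_\ell\s(a^{(\ell)})a^{(\ell)}$ gives, on comparing $(3,3)$-entries, $XY=Ys_1+Xs_2+s_3$ with $s_1,s_2,s_3\in\sum F^{\x 2}$, hence $1=X^{-1}s_1+Y^{-1}s_2+(XY)^{-1}s_3$; that is, $1$ is weakly represented by $\<X^{-1},Y^{-1},(XY)^{-1}\>\simeq q$ over $F=F_0(X,Y)$, contradicting Lemma~\ref{lem3.1}.

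The \textbf{main obstacle} is essentially organisational: one must carefully record the explicit form of $\s$ on $M_3(H)$ under the above identification, and verify that passing from field entries to quaternion entries only replaces each square by a sum of (four) squares (because $\gamma(x)x=\Nrd_H(x)$ and $\Nrd_H$ of an element of $(-1,-1)_F$ is a sum of squares). Once this is in place, both the trace-form bookkeeping and the reduction to Lemma~\ref{lem3.1} are literally those of the split case treated in Theorem~\ref{thm3.2}.
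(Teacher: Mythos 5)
Your proposal is correct and follows essentially the same route as the paper's proof: the same trace-form computation $T_\s\simeq T_{\ad_q}\ox T_\gamma$ with $\sign_P T_\s\in\{4,36\}$, the same identification of $\s$ with $\ad_h$ for $h=\<X,Y,XY\>_\gamma$ over $(H,\gamma)$, the same extraction of the $(3,3)$-entry $Y\Nrd_H(a_{13})+X\Nrd_H(a_{23})+\Nrd_H(a_{33})$, and the same reduction to Lemma~\ref{lem3.1} via the fact that reduced norms from $(-1,-1)_F$ are sums of four squares. The additional details you supply (the type computation for $\ad_q\ox\gamma$ and the factor $\tfrac12$ in the alternative witness $XY=\tfrac12\Trd(\s(b)b)$) are accurate.
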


\begin{proof} The assertion about $(A,\s)$ is clear, as is the fact that $XY\in\Sym(A,\s)$ since $XY\in F$. 

It is easy to verify that the involution trace form of $\gamma$, $T_\gamma$, is isometric to $\<2\>\ox N_H$, where $N_H=\<1,1,1,1\>$ is the norm form of $H$. Here  $N_H(x):=\Nrd_H(x)$ for all $x\in H$, where $\Nrd_H$ denotes the reduced norm on $H$. 
Since $T_\s =T_{\ad_q\ox \gamma}\simeq T_{\ad_q}\ox T_\gamma$, we have 
\[\sign_P T_\s= (\sign_P T_{\ad_q})(\sign_P T_\gamma) = 4\sign_P T_{\ad_q}\in \{4, 36\}\]
for any ordering $P\in X_F$. Hence, the set of $\s$-orderings on $F$ is not empty. It is exactly the set of $P\in X_F$ with $\sign_P T_\s=36$. (Note again that this value can indeed be attained since there are orderings on $F$ for which both $X$ and $Y$, and thus $XY$, are positive.)
Arguing similarly as in the proof of Theorem~\ref{thm3.2} we can
verify that $XY$ is totally $\s$-positive.

Before proceeding, note that the involution $\gamma$ is adjoint to the hermitian form $\<1\>_\gamma$ over $(H,\gamma)$. Hence, $\s$ is adjoint to the hermitian form $h=q\ox \<1\>_\gamma =\<X,Y,XY\>_\gamma$ over $(H,\gamma)$. Thus
\[h(x,y)= \gamma(x_1) X y_1 +\gamma(x_2) Y y_2 + \gamma(x_3)XY y_3\]
for vectors $x=(x_1,x_2,x_3)$ and $y=(y_1,y_2,y_3)$ in the right $H$-vector space $H^3$.

Next we show that $XY$ is not a sum of hermitian squares in $(A,\s)=(M_3(H), \ad_h)$. We identify $XY$ with $XYI_3$ in $M_3(H)$, where $I_3$ denotes the $3\x 3$ identity matrix. Assume for the sake of contradiction that $XYI_3$ is a sum of elements of the form $\s(a)a$ with $a=[a_{ij}]_{1\leq i,j\leq 3} \in M_3(H)$. Recall that
\[\s(a)a =\ad_h(a)a=
\left[\begin{smallmatrix}
X & & \\
& Y & \\
& & XY
\end{smallmatrix}\right] \cdot \gamma(a)^t\cdot 
\left[\begin{smallmatrix}
X & & \\
& Y & \\
& & XY
\end{smallmatrix}\right]^{-1}\cdot a, \]
where $\gamma(a)=\big[\gamma(a_{ij})\big]_{1\leq i,j\leq 3}$.
The $(3,3)$-entry of $\s(a)a$ is equal to
\[\gamma(a_{13})Ya_{13}+\gamma(a_{23})X a_{23} +\gamma(a_{33}) a_{33}=YN_H(a_{13})+XN_H(a_{23})+N_H(a_{33}).\]
Since $N_H=\<1,1,1,1\>$, each of $N_H(a_{13})$, $N_H(a_{23})$, $N_H(a_{33})$ is a sum of four squares in $F$. Thus, by our assumption there are $s_1, s_2, s_3 \in \sum F^{\x 2}$ such that
\[XY=Y s_1+ Xs_2 + s_3.\]
We can now finish the proof with an appeal to Lemma~\ref{lem3.1}, as in the proof of Theorem~\ref{thm3.2}.
\end{proof}

\begin{remark} By tensoring $(M_3(F), \ad_q)$  with Hamilton's quaternion division algebra, equipped with a \emph{unitary} involution one obtains a counterexample in the non-split unitary case. We leave the details, which are similar to those in the proof of Theorem~\ref{thm3.3}, to the diligent reader.
\end{remark}

\begin{remark}
From a real algebra perspective it is clear that these counterexamples
to the PS~Conjecture can easily be seen to work over any formally real 
field $F$ that admits a proper semiordering (see 
\cite[\S 5]{PD} for details and unexplained terminology).
Given such a field $F$, endowed with a proper semiordering, take negative 
$a,b\in F$ such that $ab$ is negative as well.
Then $q=\langle a,b,ab\rangle$ does not weakly represent $1$ 
(the quadratic module generated by $\{-a,-b,-ab\}$ is proper) 
and thus
in $M_3(F)$, endowed with the involution $\sigma=\ad_q$, the element
$ab$ is totally $\sigma$-positive, but not a sum of hermitian squares
(as the proof of Theorem \ref{thm3.2} shows).
\end{remark}

\section{Positive Results}\label{para.pos.rel}

Procesi and Schacher \cite[p.~404 and 405]{PS} 
prove their conjecture for central simple algebras $A$ of degree two, i.e., quaternion algebras, with arbitrary involution $\s$ by 
appealing to matrices and the Cayley--Hamilton theorem.
We start this section by giving an alternative argument 
motivating some of the generalizations that follow.

Throughout this section we assume that the base field $F$ is formally real.

\begin{propo}\label{prop4.1} Let $A$ be a quaternion algebra \textup{(}not necessarily division\textup{)} with centre $K$, equipped with an arbitrary involution $\s$. Let $F$ be the fixed field of $(A,\s)$.  Each entry occurring in a diagonalization of $T_\s$ is a sum of hermitian squares.
\end{propo}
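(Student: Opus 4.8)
The key point is that any entry appearing in a diagonalization of $T_\s$ is a value represented by $T_\s$, hence of the form $\Trd(\s(x)x)$, a trace of a hermitian square; so the proposition amounts to showing that traces of hermitian squares are sums of hermitian squares when $\deg A=2$ (compare Remark~\ref{rem2.4}), and for this we need an explicit diagonalization of $T_\s$. The plan is to produce one, separately in each of the three possible types for $\s$ (symplectic, orthogonal, unitary), in which every entry has the shape $\lambda\cdot\mu$ with $\lambda$ a sum of squares in $F$ and $\mu=\s(z)z$ a hermitian square. This is enough: if $\lambda=\sum_kf_k^2$, then, $f_k$ being central, $\lambda\cdot\s(z)z=\sum_k\s(f_kz)(f_kz)$, which is a sum of hermitian squares.

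Assume first $\s$ is of the first kind. If $\s$ is symplectic, it is the canonical involution $\gamma$ of $A=(a,b)_K$; since $\gamma(x)x=\Nrd(x)$, we get $T_\gamma\simeq\langle2\rangle\otimes N_A$ with $N_A\simeq\langle1,-a,-b,ab\rangle$ the norm form, and each of $1,-a,-b,ab$ is a value of $N_A$, hence of the form $\gamma(z)z$; the factor $2$ is absorbed by doubling. If $\s$ is orthogonal, write $\s=\Int(u)\circ\gamma$ with $u$ an invertible pure quaternion. Using that $\Sym(A,\s)$ and $\Skew(A,\s)$ are orthogonal for $T_\s$, that $\s(x)x=x^2$ on $\Sym(A,\s)$, that $\Trd(x^2)=\Trd(x)^2-2\Nrd(x)$, and that $\Skew(A,\s)=Fu$ with $\s(u)u=-u^2$, one computes $T_\s\simeq\langle2,2v_1^2,2v_2^2,-2u^2\rangle$ for a suitable orthogonal basis $1,v_1,v_2$ of $\Sym(A,\s)$; here $v_i^2=\s(v_i)v_i$ and $-u^2=\s(u)u$ are hermitian squares, and doubling again takes care of the factor $2$.

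Now assume $\s$ is unitary, with $K=F(\sqrt d)$, and put $V=\Sym(A,\s)$, a four-dimensional $F$-space with $A=V\oplus\sqrt d\,V$. Since $\Trd_{A/K}$ and $\Nrd_{A/K}$ intertwine $\s$ with the nontrivial automorphism of $K/F$, both $\Trd_{A/K}(v)$ and $\Nrd_{A/K}(v)$ lie in $F$ for $v\in V$; using this together with $\s(\sqrt d)=-\sqrt d$, the centrality of $\sqrt d$, and the orthogonality of $V$ and $\sqrt d\,V$ for $T_\s$, one finds $T_\s\simeq\langle2,-2d\rangle\otimes q_V$, where $q_V(v)=\Trd_{A/K}(v^2)$. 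Splitting off the line $F\cdot1$ and diagonalizing $q_V$ on the trace-zero part $V_0$ of $V$ — on which $q_V(v)=2v^2\in F$ — gives $q_V\simeq\langle2,2c_1,2c_2,2c_3\rangle$ with $c_i=v_i^2$ for a suitable basis $v_1,v_2,v_3$ of $V_0$. The eight entries of $T_\s$ are then $4$, $4c_i$, $-4d$, $-4dc_i$, which I would realize as the single hermitian squares $\s(2)\,2$, $\s(2v_i)(2v_i)$, $\s(2\sqrt d)(2\sqrt d)$ and $\s(2\sqrt d\,v_i)(2\sqrt d\,v_i)$.

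The only delicate point is the passage, in the orthogonal and unitary cases, to a diagonalization whose basis vectors $v_i$ are invertible (so that $v_i^2\in F^\times$): this is justified because $T_\s$ is nonsingular, which forces the restricted forms involved ($T_\s|_{\Sym(A,\s)}$, and $q_V$ on $V_0$) to be nonsingular and hence to admit orthogonal bases consisting of anisotropic vectors. In particular the split sub-cases, such as $A\cong M_2(K)$, need no separate treatment. Once Proposition~\ref{prop4.1} is in place, combining it with Theorem~\ref{thm:NCh17} — and using once more that a product of a sum of squares in $F$ with a sum of hermitian squares is a sum of hermitian squares — establishes the PS~Conjecture for all quaternion algebras with involution, which is how I would motivate the generalizations in the rest of the section.
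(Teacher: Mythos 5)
Your proof is correct, but it takes a genuinely different route from the paper's in two of the three cases. For the symplectic case you and the paper do the same thing ($T_\gamma\simeq\langle 2\rangle\otimes\langle 1,-a,-b,ab\rangle$ with each entry visibly of the form $\gamma(z)z$). For the orthogonal case the paper simply quotes the diagonalization $T_\s\simeq\langle 2\rangle\otimes\langle 1,\Nrd_A(u),-\Nrd_A(s),-\Nrd_A(su)\rangle$ from \cite[11.6]{BOI} and rewrites each entry as a hermitian square, whereas you rederive an equivalent diagonalization from first principles, using $\Sym(A,\s)\perp\Skew(A,\s)$, the identity $\Trd(x^2)=\Trd(x)^2-2\Nrd(x)$, and $\Skew(A,\s)=Fu$; your entries $2$, $2v_i^2=-2\Nrd(v_i)$, $-2u^2=2\Nrd(u)$ match the cited form. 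For the unitary case the paper invokes Albert's theorem to write $(A,\s)=(A_0,\gamma_0)\otimes_F(K,\tau)$ and reduces to the symplectic case via $T_\s\simeq T_{\gamma_0}\otimes\langle 1,-\delta\rangle$, while you instead compute directly on the decomposition $A=V\oplus\sqrt d\,V$, obtaining $T_\s\simeq\langle 2,-2d\rangle\otimes q_V$ and then diagonalizing $q_V$; the resulting entries and their realizations as hermitian squares are correct (e.g.\ $\s(2\sqrt d\,v_i)(2\sqrt d\,v_i)=-4d\,v_i^2$ since $\sqrt d$ is central and $\s(\sqrt d)=-\sqrt d$). What your approach buys is self-containedness — no appeal to \cite[11.6]{BOI} or to Albert's decomposition — together with an explicit justification (via nonsingularity of the restricted forms) that the basis vectors can be taken invertible, which covers the split case without separate treatment; what the paper's approach buys is brevity and, in the unitary case, a structural reduction (Theorem~\ref{thm4.2}-style multiplicativity of the problem under tensor products) that it then exploits in the rest of Section~\ref{para.pos.rel}. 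Your opening reduction (each diagonal entry is a value of $T_\s$, hence a trace of a hermitian square, so the statement is an instance of the second formulation of the conjecture) and your closing observation that a sum of squares in $F$ times a hermitian square is a sum of hermitian squares are both correct and are exactly the mechanisms the paper uses implicitly.
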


\begin{proof} (i) We first consider involutions of the first kind on $A$.
Let $A$ be the quaternion algebra $(a,b)_F$ with $F$-basis $\{1,i,j,k\}$ where $i$, $j$ and $k$ anti-commute, $ij=k$, $i^2=a$ and $j^2=b$.

If $\s$ is symplectic, then $\s$ is the unique quaternion conjugation involution $\gamma$ on $A$. An easy computation gives $T_\s=T_\gamma\simeq \<2\>\ox \<1,-a,-b,ab\>$.  We have
\[1=\gamma(1)1,\ -a=\gamma(i)i,\ -b=\gamma(j)j,\ ab=\gamma(k)k.\]

If $\s$ is orthogonal, then $\s=\Int(u)\circ\gamma$, where $u\in A$ satisfies $\gamma(u)=-u$. From \cite[11.6]{BOI} we know that
\[T_\s \simeq \<2\>\ox \<1, \Nrd_A(u), -\Nrd_A(s), -\Nrd_A(su)\>\]
for some $s\in A$ with $\s(s)=s=-\gamma(s)$. Now, 
\begin{align*}
\Nrd_A(u)&=u\gamma(u) =u\gamma(u) u^{-1}u=\s(u)u;\\
-\Nrd_A(s)&=-\gamma(s)s=\s(s)s;\\
-\Nrd_A(su)&=-\Nrd_A(s)\Nrd_A(u)=-\gamma(s)s\Nrd_A(u)=\s(s)\s(u)us=\s(us)us.
\end{align*}

(ii) Finally, let  $K=F(\sqrt{\delta})$ and let $A$ be a quaternion algebra over $K$ with unitary involution $\s$ whose restriction to $K$ is $\tau$, where $\tau$ is determined by $\tau(\sqrt{\delta})=-\sqrt{\delta}$. 
By a well-known result of Albert \cite[2.22]{BOI} there exists a unique quaternion $F$-subalgebra $A_0\subseteq A$  such that
\[A=A_0\ox_F K \text{ and } \sigma= \gamma_0\ox \tau,\]
where $\gamma_0$ is quaternion conjugation on $A_0$.  Then $T_\s\simeq T_{\gamma_0}\ox T_\tau\simeq T_{\gamma_0}\ox \<1,-\delta\>$. Since $\tau(\sqrt{\delta})\sqrt{\delta}=-\delta$, we are finished by the symplectic part of the proof.
\end{proof}

This shows in particular that the PS~Conjecture is true for full matrix algebras of degree two over a formally real field $F$ since these are just split quaternion algebras. 

Part (ii) of the proof of Proposition~\ref{prop4.1} motivates the following more general result:

\begin{theorem}\label{thm4.2} Let $A$ and $B$ be central simple algebras with centre $K$, equipped with arbitrary involutions $\s$ and $\tau$, respectively. Assume that $(A,\s)$ and $(B,\tau)$ have the same fixed field $F$. 
If the PS~Conjecture holds for $(A,\s)$ and $(B,\tau)$, it also holds for the tensor product $(A\ox_K B, \s\ox \tau)$.
\end{theorem}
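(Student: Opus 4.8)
The plan is to reduce the statement for the tensor product to the equivalent form of the PS~Conjecture given in parenthesis in Conjecture~\ref{open:PS}, namely that the trace of a hermitian square is a sum of hermitian squares, and then to exploit the multiplicativity of the reduced trace and of involution trace forms under tensor products. First I would recall that for $(A\ox_K B,\s\ox\tau)$ one has $\Trd_{A\ox_K B/F}=\Trd_{A/F}\cdot\Trd_{B/F}$ in the appropriate sense (via $\Trd_{A\ox_K B/K}=\Trd_{A/K}\ox\Trd_{B/K}$ and then composing with $\Trd_{K/F}$ in the unitary case), and more importantly that $T_{\s\ox\tau}\simeq T_\s\ox T_\tau$ as quadratic forms over $F$ (this is the identity already used in the proofs of Theorems~\ref{thm3.2} and~\ref{thm3.3}). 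Consequently, if $\a_1,\dots,\a_r$ is a diagonalization of $T_\s$ and $\gb_1,\dots,\gb_s$ a diagonalization of $T_\tau$, then $\{\a_i\gb_j\}$ is a diagonalization of $T_{\s\ox\tau}$.

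The key step is then the following: by Remark~\ref{rem2.4}, the PS~Conjecture for a given algebra is equivalent to each entry in a diagonalization of its involution trace form being a sum of hermitian squares in that algebra. So by hypothesis each $\a_i$ is a sum of hermitian squares in $(A,\s)$, say $\a_i=\sum_k \s(x_{ik})x_{ik}$, and each $\gb_j$ is a sum of hermitian squares in $(B,\tau)$, say $\gb_j=\sum_\ell \tau(y_{j\ell})y_{j\ell}$. Now in $A\ox_K B$ with the involution $\s\ox\tau$, a hermitian square of an elementary tensor is $(\s\ox\tau)(x\ox y)(x\ox y)=\s(x)x\ox\tau(y)y$, and the product $\a_i\gb_j=\big(\sum_k\s(x_{ik})x_{ik}\big)\ox\big(\sum_\ell\tau(y_{j\ell})y_{j\ell}\big)=\sum_{k,\ell}\s(x_{ik})x_{ik}\ox\tau(y_{j\ell})y_{j\ell}=\sum_{k,\ell}(\s\ox\tau)(x_{ik}\ox y_{j\ell})(x_{ik}\ox y_{j\ell})$ is visibly a sum of hermitian squares in $(A\ox_K B,\s\ox\tau)$. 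Since $\{\a_i\gb_j\}$ diagonalizes $T_{\s\ox\tau}$, every entry of a diagonalization of the involution trace form of $(A\ox_K B,\s\ox\tau)$ is a sum of hermitian squares, so by the easy direction of Remark~\ref{rem2.4} the PS~Conjecture holds for $(A\ox_K B,\s\ox\tau)$.

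A subtlety I would want to be careful about is that a diagonalization of a trace form is only determined up to chains of equivalences, so I should phrase the argument using the particular orthogonal basis coming from the tensor product of chosen orthogonal bases for $T_\s$ and $T_\tau$; the statement of Remark~\ref{rem2.4} is insensitive to this choice since it asserts the existence of \emph{some} diagonalization with the desired property, and one checks (as in Remark~\ref{rem2.4} itself) that if it holds for one diagonalization it holds for all, because the entries of any two diagonalizations are related by sums of products with sums of squares in $F$, hence with sums of hermitian squares. The only genuine input beyond bookkeeping is the isometry $T_{\s\ox\tau}\simeq T_\s\ox T_\tau$, which is classical (cf.\ \cite[\S11]{BOI}), so I do not expect a real obstacle here; the main thing to get right is the compatibility of the hypotheses (same fixed field $F$, so that $T_\s$, $T_\tau$ and $T_{\s\ox\tau}$ are all quadratic forms over the \emph{same} field and the multiplicativity statement makes sense) and the observation that the tensor product of hermitian squares is again a hermitian square, which makes the whole argument go through.
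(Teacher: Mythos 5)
Your proposal is correct and is essentially the paper's own argument written out in full: the paper's proof consists of the single remark that the claim follows from $T_{\s\ox\tau}\simeq T_\s\ox T_\tau$ together with the fact that elements of $A$ commute with elements of $B$ in $A\ox_K B$, which is exactly the computation $\a_i\gb_j=\sum_{k,\ell}(\s\ox\tau)(x_{ik}\ox y_{j\ell})(x_{ik}\ox y_{j\ell})$ you carry out, combined with the reduction via Remark~\ref{rem2.4} to the entries of a diagonalization of the trace form. Your additional care about the non-uniqueness of diagonalizations is a sensible refinement but does not change the route.
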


\begin{proof} This is a simple computation, using the fact that $T_{\s\ox\tau}\simeq T_\s \ox T_\tau$ and that elements of $A$ commute with elements of $B$ in the tensor product $A\ox_K B$.
\end{proof}

\begin{coro}\label{cor4.3} Let $(Q_1,\s_1),\ldots, (Q_\ell,\s_\ell)$ be quaternion algebras with arbitrary involution over $K$ and with common fixed field $F$. The PS~Conjecture holds for the tensor product $\bigotimes_{i=1}^\ell (Q_i,\s_i)$. 
\end{coro}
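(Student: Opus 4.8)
The plan is to obtain Corollary~\ref{cor4.3} by a straightforward induction on $\ell$, using Theorem~\ref{thm4.2} as the inductive step and Proposition~\ref{prop4.1} as the base case. The one point that needs care is bookkeeping about fixed fields and centres, so that Theorem~\ref{thm4.2} applies at each stage.

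First I would record the base case $\ell=1$: by hypothesis each $(Q_i,\s_i)$ is a quaternion algebra over $K$ with fixed field $F$, and Proposition~\ref{prop4.1} tells us that every entry in a diagonalization of $T_{\s_i}$ is a sum of hermitian squares. By Remark~\ref{rem2.4} (together with Theorem~\ref{thm:NCh17}) this is exactly the statement that the PS~Conjecture holds for $(Q_i,\s_i)$: any totally $\s_i$-positive element is, via Theorem~\ref{thm:NCh17}(ii), a sum of terms $\alpha^\ve\,\s_i(x_{i,\ve})x_{i,\ve}$ with each weight $\alpha_j$ an entry of a diagonalization of $T_{\s_i}$, hence itself a sum of hermitian squares lying in $F$; absorbing these into the squares exhibits the element as a sum of hermitian squares.

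For the inductive step, suppose the PS~Conjecture holds for $\bigotimes_{i=1}^{\ell-1}(Q_i,\s_i)$. I would set $A=\bigotimes_{i=1}^{\ell-1}(Q_i,\s_i)$ with its involution $\s=\s_1\ox\cdots\ox\s_{\ell-1}$ and $B=(Q_\ell,\s_\ell)$ with $\tau=\s_\ell$. The tensor products here are over $K$, so $A$ and $B$ are central simple over $K$; one checks that the fixed field of $(A,\s)$ is again $F$ (the fixed field is multiplicative under $K$-tensor products of involutions with common fixed field, by the $T_{\s\ox\tau}\simeq T_\s\ox T_\tau$ formula used in Theorem~\ref{thm4.2}, or directly), and the fixed field of $(B,\tau)$ is $F$ by hypothesis. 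Thus Theorem~\ref{thm4.2} applies and yields the PS~Conjecture for $(A\ox_K B,\s\ox\tau)=\bigotimes_{i=1}^{\ell}(Q_i,\s_i)$, completing the induction.

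I do not expect a genuine obstacle here: the result is essentially a formal consequence of Theorem~\ref{thm4.2} and Proposition~\ref{prop4.1}. The only thing one must be slightly careful about is that the hypothesis ``common fixed field $F$'' is preserved at each stage of the induction, so that Theorem~\ref{thm4.2} is legitimately invoked; this is immediate once one notes that all tensor products are taken over the common centre $K$ and the fixed field is determined by the involution trace form, which multiplies under $\ox_K$. Hence the proof is little more than a sentence invoking Theorem~\ref{thm4.2} repeatedly.
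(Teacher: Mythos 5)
Your proposal is correct and follows exactly the paper's route: the paper proves Corollary~\ref{cor4.3} in one line as ``an immediate consequence of Proposition~\ref{prop4.1} and Theorem~\ref{thm4.2},'' which is precisely your induction (Proposition~\ref{prop4.1} plus Remark~\ref{rem2.4} for the base case, Theorem~\ref{thm4.2} for the step). Your extra bookkeeping about the fixed field being preserved under $\ox_K$ is a sound elaboration of what the paper leaves implicit.
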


\begin{proof} This is an  immediate consequence of Proposition~\ref{prop4.1} and Theorem~\ref{thm4.2}. 
\end{proof}

\begin{coro} Let $A=M_n(F)$ be a split algebra of $2$-power degree $n=2^\ell$, equipped with an orthogonal involution $\s$ which is adjoint to an $n$-fold Pfister form over $F$. The PS~Conjecture holds for $(A,\s)$.
\end{coro}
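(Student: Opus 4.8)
The plan is to reduce this corollary to Corollary~\ref{cor4.3}, i.e., to the case of tensor products of quaternion algebras with involution, by exploiting the classical structure theory of Pfister forms. The key observation is that an $n$-fold Pfister form $\pi=\langle\langle a_1,\ldots,a_\ell\rangle\rangle$ over $F$ (where $n=2^\ell$) is, up to a scalar, the tensor product of the $2$-dimensional forms $\langle 1,-a_i\rangle$; and each such binary form $\langle 1,-a_i\rangle$ is the norm form of the quaternion algebra $(a_i,c_i)_F$ for a suitable $c_i$, or more directly, scaling each binary factor one can view $\pi$ as related to the trace form of a tensor product of quaternions. So the first step is to pin down the precise statement I want: there exist quaternion $F$-algebras $Q_1,\ldots,Q_\ell$ with involutions $\s_i$ such that $(M_n(F),\s)\cong\bigotimes_{i=1}^\ell(Q_i,\s_i)$ with matching fixed field $F$.

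To carry this out I would first recall that an orthogonal involution adjoint to an $n$-fold Pfister form is the involution one gets on $M_{2^\ell}(F)\cong M_2(F)^{\otimes\ell}$ by taking the $\ell$-fold tensor product of the adjoint involutions $\ad_{\langle 1,-a_i\rangle}$ on the split quaternion algebras $M_2(F)$. Concretely, if $\pi_i=\langle 1,-a_i\rangle$ then $\pi\simeq\pi_1\otimes\cdots\otimes\pi_\ell$ (a Pfister form is multiplicative, hence similar to this tensor product; and since $\pi$ represents $1$ it is actually isometric to a scaled version that we may absorb), and the adjoint involution of a tensor product of bilinear forms on a tensor product of matrix algebras is the tensor product of the adjoint involutions, by \cite[Ch.~I, \S2]{BOI}. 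Each $M_2(F)$ is a (split) quaternion algebra over $K=F$, so $(M_n(F),\s)$ is literally a tensor product over $F$ of $\ell$ quaternion algebras with involution, all with fixed field $F$.

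With that identification in hand, the corollary is immediate: Corollary~\ref{cor4.3} says the PS~Conjecture holds for any tensor product over a common centre $K$ of quaternion algebras with arbitrary involution and common fixed field $F$, so it holds here with $K=F$. So the entire proof reads: ``By multiplicativity of Pfister forms, $\s$ is isometric-adjoint to $\bigotimes_{i=1}^\ell \langle 1,-a_i\rangle$ for suitable $a_i\in\Kd$; hence $(A,\s)\cong\bigotimes_{i=1}^\ell(M_2(F),\ad_{\langle 1,-a_i\rangle})$, a tensor product of split quaternion algebras with orthogonal involution and common fixed field $F$; now apply Corollary~\ref{cor4.3}.''

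The main obstacle is the bookkeeping around similarity versus isometry of the Pfister form and whether a scalar factor spoils the tensor decomposition of the involution. Since $\pi$ is a Pfister form it represents $1$, so it is isometric (not merely similar) to $\langle 1,-a_1\rangle\otimes\cdots\otimes\langle 1,-a_\ell\rangle$ after a suitable choice of the $a_i$ coming from a Pfister-form chain of $\pi$ itself, and the adjoint involution only depends on $\pi$ up to a scalar anyway, so this is harmless; but it is the one point where care is needed. A secondary, purely cosmetic point is that one should note $n$-fold here means the form has dimension $2^n$ in some conventions; the statement clearly intends $\dim\pi=n=2^\ell$, i.e. an $\ell$-fold Pfister form, and I would phrase the proof so as to sidestep that clash of terminology.
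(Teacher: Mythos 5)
Your proof is correct, and it reaches the same target as the paper --- a decomposition of $(A,\s)$ into a tensor product of quaternion algebras with involution, followed by an appeal to Corollary~\ref{cor4.3} --- but it gets there by a more elementary route. The paper simply cites Becher's proof of the Pfister Factor Conjecture \cite{Bec} to obtain the decomposition $(A,\s)\cong\bigotimes_{i=1}^\ell(Q_i,\s_i)$. You instead observe that only the easy direction is needed here: an $\ell$-fold Pfister form is by definition the tensor product of the binary forms $\<1,-a_i\>$, and since adjoint involutions are compatible with tensor products of forms (and depend on the form only up to a scalar, which disposes of the similarity-versus-isometry issue you rightly flag), one gets $(M_n(F),\s)\cong\bigotimes_{i=1}^\ell(M_2(F),\ad_{\<1,-a_i\>})$ directly, with each factor a split quaternion algebra with orthogonal involution over the common fixed field $F$. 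This makes the corollary self-contained modulo Corollary~\ref{cor4.3}, whereas Becher's theorem is really needed for the converse implication (totally decomposable, split, orthogonal $\Rightarrow$ adjoint to a Pfister form), which is not what this corollary uses. Two cosmetic remarks: your passing identification of $\<1,-a_i\>$ with a quaternion norm form is off (norm forms are $4$-dimensional) but plays no role in the final argument; and you are right that ``$n$-fold'' in the statement should be read as ``$\ell$-fold'', i.e., the form has dimension $n=2^\ell$.
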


\begin{proof} By Becher's proof of the Pfister Factor Conjecture \cite{Bec}, $(A,\s)$ decomposes as
\[(A,\s)\cong \bigotimes_{i=1}^\ell (Q_i,\s_i),\]
where $(Q_1,\s_1),\ldots, (Q_\ell,\s_\ell)$ are quaternion algebras with involution. An appeal to Corollary~\ref{cor4.3} finishes the proof.
\end{proof}

\begin{coro}\label{cor4.4} Let $A=M_n(K)$ be a split algebra of $2$-power degree $n=2^\ell$, equipped with a hyperbolic involution $\s$ of any kind. Let $F$ be the fixed field of $(A,\s)$.  The PS~Conjecture holds for $(A,\s)$.
\end{coro}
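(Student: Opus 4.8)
The plan is to reduce the hyperbolic case to the already-established Corollary~\ref{cor4.3} by decomposing $(A,\s)$ as a tensor product of quaternion algebras with involution. The key structural fact is that a hyperbolic involution on a split algebra $M_n(K)$ of $2$-power degree is, up to conjugation, adjoint to a hyperbolic form, and hyperbolic forms over $K$ (or hyperbolic hermitian forms, in the unitary case) are highly divisible: a hyperbolic $n$-dimensional form is an orthogonal sum of $n/2$ hyperbolic planes, and the hyperbolic plane $\mathbb{H} = \langle 1,-1\rangle$ is a $1$-fold Pfister form. More precisely, $h_{\mathrm{hyp}} \simeq \langle 1,-1\rangle^{\otimes \ell}$ as an $n$-dimensional form (since $n = 2^\ell$ and any two hyperbolic forms of the same dimension are isometric). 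Hence $(A,\s) = (M_n(K),\ad_{h_{\mathrm{hyp}}})$ decomposes, via the standard correspondence between adjoint involutions of tensor products of forms and tensor products of adjoint involutions (see \cite[\S4.A, \S11]{BOI}), as $\bigotimes_{i=1}^\ell (M_2(K),\ad_{\langle 1,-1\rangle})$.

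First I would handle the orthogonal and symplectic (first-kind) cases: here $K=F$, and $(M_2(F),\ad_{\langle 1,-1\rangle})$ is a split quaternion algebra with involution over the formally real field $F$, so each tensor factor satisfies the PS~Conjecture by Proposition~\ref{prop4.1} (via the remark following it). Then an application of Corollary~\ref{cor4.3} — or equivalently repeated use of Theorem~\ref{thm4.2} — gives the result for $(A,\s)$. Second I would treat the unitary (second-kind) case: now $K=F(\sqrt{\delta})$, and a hyperbolic unitary involution on $M_n(K)$ is adjoint to the hyperbolic hermitian form over $(K,\tau)$ of dimension $n$, which again splits as an $\ell$-fold tensor product of hyperbolic hermitian planes. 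Each factor $(M_2(K),\ad_{\mathbb{H}_\tau})$ is a quaternion $K$-algebra with a unitary involution fixing $F$, so Proposition~\ref{prop4.1}(ii) applies, and Corollary~\ref{cor4.3} again closes the argument.

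The step requiring the most care is justifying the tensor decomposition of the hyperbolic involution — in particular verifying that ``hyperbolic of degree $2^\ell$'' really does force the adjoint-form decomposition into $\ell$ hyperbolic planes compatibly with the involutions, and that the definition of ``hyperbolic involution of any kind'' used here matches the adjoint-to-a-hyperbolic-form description (for the unitary case one should note that a unitary involution is hyperbolic precisely when the underlying hermitian form is hyperbolic, cf.~\cite[\S6.B]{BOI}). Once that identification is in place, the rest is a routine concatenation of Proposition~\ref{prop4.1} and Corollary~\ref{cor4.3}, with no new computation needed. One should also remark that the fixed field $F$ of each quaternion factor is indeed the common field $F$, so that Corollary~\ref{cor4.3} applies verbatim.
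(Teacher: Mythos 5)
Your overall strategy coincides with the paper's: decompose $(A,\s)$ as a tensor product of $\ell$ quaternion algebras with involution over $K$ with common fixed field $F$, then apply Corollary~\ref{cor4.3}. The paper obtains the decomposition by quoting \cite[Theorem~2.2]{BST}; you instead derive it from the isometry $h_{\mathrm{hyp}}\simeq\langle 1,-1\rangle^{\otimes\ell}$ (any two hyperbolic forms of equal dimension are isometric, and $\langle 1,-1\rangle^{\otimes\ell}$ is hyperbolic of dimension $2^\ell$) together with the multiplicativity of adjoint involutions. For the orthogonal and the unitary cases this is a correct, self-contained replacement for the citation.

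The symplectic case, however, does not work as you describe. A symplectic involution on $M_n(F)$ is adjoint to a nonsingular \emph{alternating} bilinear form, not to a quadratic form, so it is not of the form $\ad_{h_{\mathrm{hyp}}}$ for a hyperbolic quadratic form $h_{\mathrm{hyp}}$; and, more decisively, $\bigotimes_{i=1}^{\ell}(M_2(F),\ad_{\langle 1,-1\rangle})$ carries an \emph{orthogonal} involution, since a tensor product of orthogonal involutions of the first kind is again orthogonal (cf.~\cite[2.23]{BOI}), so it cannot be isomorphic to $(A,\s)$ when $\s$ is symplectic. The repair is routine: all nonsingular alternating forms of a fixed dimension are equivalent, so $\s\cong\gamma\otimes\ad_{\langle 1,\dots,1\rangle}$ with $\gamma$ the canonical symplectic involution on $M_2(F)$ and $\langle 1,\dots,1\rangle=\langle 1,1\rangle^{\otimes(\ell-1)}$; this yields one symplectic and $\ell-1$ orthogonal quaternion factors, and Corollary~\ref{cor4.3} applies as before. (Alternatively, the split symplectic case is subsumed by Theorem~\ref{thm4.7}.) With that correction, and noting as you do that hyperbolicity of a unitary involution is equivalent to hyperbolicity of its adjoint hermitian form, your proof is complete.
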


\begin{proof} Recall from \cite[Theorem~2.1]{BST} that the involution $\s$ is hyperbolic if there exists an idempotent $e\in A$ such that $\s(e)=1-e$ or, equivalently, if the adjoint (quadratic, alternating or hermitian) form of $\s$ is hyperbolic. 

If $\ell=1$ this is just the split version of Proposition~\ref{prop4.1}. Assume now that $\ell\geq 2$.
By \cite[Theorem~2.2]{BST}, $(A,\s)$ decomposes as
\[(A,\s)\cong \bigotimes_{i=1}^\ell (Q,\s_i),\]
where $Q=M_2(K)$ and $\s_1,\ldots, \s_\ell$ are involutions on $Q$.  An appeal to Corollary~\ref{cor4.3} finishes the proof.
\end{proof}

\begin{coro} Let $A=M_n(F)$ be a split algebra of $2$-power degree $n=2^\ell$, equipped with a symplectic involution $\s$.  The PS~Conjecture holds for $(A,\s)$.
\end{coro}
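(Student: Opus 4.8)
The plan is to recognise the symplectic split case as an instance of the hyperbolic split case already handled in Corollary~\ref{cor4.4}; the only thing that really needs checking is that a symplectic involution on a split algebra is automatically hyperbolic.

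First I would unwind the definition of ``symplectic''. Since $A=M_n(F)$ is already split over $F$, saying that $\s$ is symplectic means exactly that $\s$ is adjoint to a nondegenerate alternating bilinear form $b$ on $F^n$, with $n$ necessarily even, say $n=2m$ (so in fact $\ell\geq 1$). But every nondegenerate alternating bilinear form of rank $2m$ is isometric to the orthogonal sum of $m$ hyperbolic planes $\bigl[\begin{smallmatrix}0&1\\-1&0\end{smallmatrix}\bigr]$; in particular $b$ is hyperbolic. By the dictionary recalled in the proof of Corollary~\ref{cor4.4} (cf.~\cite[Theorem~2.1]{BST}), the adjoint involution $\s=\ad_b$ is then a hyperbolic involution of the first kind.

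Now $A=M_n(F)$ is a split algebra of $2$-power degree $n=2^\ell$ carrying a hyperbolic involution, so Corollary~\ref{cor4.4} applies verbatim and yields the PS~Conjecture for $(A,\s)$. I do not foresee a genuine obstacle: the one nontrivial ingredient is the elementary structure theorem for symplectic spaces, and once it is in place the statement is literally a special case of an already-proved corollary. If one wished to bypass Corollary~\ref{cor4.4}, one could instead invoke the uniqueness up to conjugation of the symplectic involution on a split algebra to write $(A,\s)\cong(M_2(F),\gamma)\ox_F(M_2(F),t)^{\ox(\ell-1)}$ --- with $\gamma$ the canonical involution of $M_2(F)$ and $t$ the transpose, the product of involution types confirming that this tensor-product involution is symplectic --- and then appeal to Corollary~\ref{cor4.3}.
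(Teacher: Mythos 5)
Your argument is correct and is essentially identical to the paper's proof: the paper likewise observes that a symplectic involution on a split algebra is adjoint to an alternating form, which is automatically hyperbolic, and then invokes Corollary~\ref{cor4.4}. The alternative route you sketch via a quaternion tensor decomposition and Corollary~\ref{cor4.3} is also viable but unnecessary.
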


\begin{proof} If $\s$ is a symplectic involution, it is hyperbolic (since it is adjoint to an alternating form over $F$ which is automatically hyperbolic) and we are finished by Corollary~\ref{cor4.4}.
\end{proof}

In fact, the PS~Conjecture is true for \emph{any} split algebra  with symplectic involution. Such an algebra is always of even degree.

\begin{theorem}\label{thm4.7} Let $A=M_n(F)$ be a split algebra of even degree $n=2m$, equipped with a symplectic involution $\s$.  The PS~Conjecture holds for $(A,\s)$.
\end{theorem}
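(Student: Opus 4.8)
The key structural fact is that every symplectic involution on $M_n(F)$ with $n=2m$ is adjoint to an alternating (skew-symmetric, nonsingular) bilinear form over $F$, and over a field any such form is hyperbolic. Hence, up to conjugation by an element of $\GL_n(F)$ --- which is an isomorphism of algebras with involution, so it preserves both the property of being totally $\s$-positive and the property of being a sum of hermitian squares --- we may assume the Gram matrix of $\s$ is the standard symplectic form $J=\left[\begin{smallmatrix} 0 & I_m \\ -I_m & 0\end{smallmatrix}\right]$, i.e. $\s(X)=J^{-1}X^tJ$. The plan is to exploit Remark~\ref{rem2.4}: it suffices to show that $T_\s=\Trd(\s(x)x)$, i.e. the quadratic form $X\mapsto \tr(\s(X)X)$ on $M_n(F)$, takes values that are sums of hermitian squares, equivalently that every entry in a diagonalization of $T_\s$ is a sum of hermitian squares in $(A,\s)$.

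First I would compute $T_\s$ explicitly in the standard basis of elementary matrices. Writing $\s(X)=J^{-1}X^tJ$ one finds $\tr(\s(X)X)=\tr(J^{-1}X^tJX)$, and a direct computation shows this is a (nonsingular) quadratic form of rank $n^2$ which, one checks, is hyperbolic --- in fact isometric to $\frac{n^2}{2}\times\langle 1,-1\rangle$ over $F$. (This is consistent with $\sign_P T_\s$ having to be a perfect square, namely $0$, for a symplectic involution, so that \emph{every} ordering of $F$ is a $\s$-ordering vacuously except that orderings exist; more precisely $T_\s$ is metabolic/hyperbolic so its signature is $0$.) The point is then that a diagonalization of a hyperbolic form can be taken to have all entries equal to $\pm 1$; alternatively, and more robustly, since the form is hyperbolic we may present each ``$-1$'' against a paired ``$+1$'' and only genuinely need to realize $1$ and $-1$ as sums of hermitian squares. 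But $1=\s(I_n)I_n$ is a hermitian square, and the issue is $-1$: for $-1$ to be a sum of hermitian squares one uses that $A=M_n(F)$ contains a symplectic subspace, concretely $-1=\s(e)e$ is \emph{not} generally true, so instead I would argue directly that every \emph{value} $\Trd(\s(x)x)$ is a sum of hermitian squares by writing, for a suitable basis adapted to $J$, each value as a genuine norm.

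Concretely, the cleanest route: the standard symplectic involution makes $M_n(F)\cong \End_F(F^n)$ with the adjoint for $J$, and one checks that for the rank-one matrices $E_{ij}$ the values $\tr(\s(E_{ij})E_{ij})$ are each $\pm1$, with the $+1$'s and $-1$'s matched up by the symplectic pairing so that $T_\s$ is a sum of hyperbolic planes $\langle 1,-1\rangle$. To finish via Remark~\ref{rem2.4} I then only need: (a) $1$ is a sum of hermitian squares --- clear; and (b) whenever $-\lambda^2$ appears (equivalently $-1$) it is a sum of hermitian squares in $(M_n(F),\s)$. For (b) I would use the symplectic structure directly: pick $u,v\in F^n$ with $J(u,v)=1$ (a hyperbolic pair) and the rank-one operator $w\mapsto v\,(u^tJw)$; computing its hermitian square against the standard $\s$ one gets a negative multiple of a square in $F$, hence $-1\in\sum$ hermitian squares. (Equivalently: a hyperbolic plane, being isotropic, forces $-1$ to be represented by the norm form of the involution, and the Procesi--Schacher machinery identifies norm values of the involution with sums of hermitian squares exactly as in the quaternion case.)

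The main obstacle is step (b) --- showing $-1$ (or the relevant entries of $T_\s$) is a sum of hermitian squares in $(M_n(F),\s)$ --- since for the \emph{orthogonal} transpose involution this is precisely what fails in general (Theorem~\ref{thm3.2}), so the argument must genuinely use symplecticity. The resolution is that a symplectic form over a field is always hyperbolic, hence isotropic, and isotropy of the adjoint form is exactly the mechanism that lets one write $-1$ as a hermitian square (or sum thereof): if $h(v,v)=0$ for $v\ne 0$, then taking a complementary $u$ with $h(u,v)=1$ produces, via the rank-one maps $v\otimes u$ and $u\otimes v$, elements whose hermitian squares sum to $-1\cdot(\text{unit})^2$. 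Once (a) and (b) are in hand, Theorem~\ref{thm:NCh17} together with Remark~\ref{rem2.4} immediately yields that every totally $\s$-positive element of $(A,\s)$ is a sum of hermitian squares, completing the proof. \qed
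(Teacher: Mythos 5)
Your overall strategy coincides with the paper's: use hyperbolicity of $T_\s$ to reduce, via Theorem~\ref{thm:NCh17} and Remark~\ref{rem2.4}, to showing that the entries $\pm1$ of a diagonalization of $T_\s$ — in effect, that $-1=-I_n$ — are sums of hermitian squares in $(M_n(F),\s)$. The gap is in your step (b), which is the entire content of the theorem. For a symplectic involution $\s=\Int(S)\circ t$ with $S^t=-S$, \emph{every} rank-one element has hermitian square zero: if $a=xy^t$ then
\[
\s(a)a=S\,(xy^t)^t\,S^{-1}\,xy^t=(x^tS^{-1}x)\,Syy^t=0,
\]
since $S^{-1}$ is again alternating. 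So the hermitian squares of your hyperbolic-pair operators $v\otimes u$ and $u\otimes v$ are $0$, not negative multiples of squares, and no sum of rank-one hermitian squares can ever equal $-I_n$. For the same reason your intermediate claim that $\tr(\s(E_{ij})E_{ij})=\pm1$ is false: all of these traces vanish (the elementary matrices are isotropic for $T_\s$, which is consistent with $T_\s$ being hyperbolic but does not hand you a $\langle 1,-1\rangle$-diagonalizing basis).

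Isotropy of the alternating form is the right intuition, but it must be exploited through an \emph{invertible} element, i.e.\ one must produce $c\in\GL_n(F)$ with $\s(c)=-c^{-1}$. The paper does this explicitly: normalize $S$ by $P^tSP=B$ with $B$ block-diagonal with blocks $\left[\begin{smallmatrix}0&1\\-1&0\end{smallmatrix}\right]$, let $X$ be block-diagonal with blocks $\left[\begin{smallmatrix}0&1\\1&0\end{smallmatrix}\right]$ so that $X^tBX=B^{-1}$, set $Y=PXP^t$ (whence $Y^tSY=S^{-1}$), and take $c=SY$; then $\s(c)c=SY^tS^tY=-SS^{-1}=-I_n$, so $-1$ is in fact a \emph{single} hermitian square. (A further small slip: since $T_\s$ is hyperbolic its signature is $0$, so $T_\s$ is indefinite and there are \emph{no} $\s$-orderings, rather than every ordering being one; total $\s$-positivity is then vacuous and the theorem asserts that every symmetric element is a sum of hermitian squares.) Without a correct construction of $-1$ as a sum of hermitian squares, your argument does not go through.
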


\begin{proof} Since $\s$ is symplectic, the quadratic form $T_\s$ is hyperbolic (see \cite[p.~227]{Lew} or \cite[Proof of 11.7]{BOI}). Thus $T_\s \simeq m\x \<1,-1\>$ and it suffices to show that $-1$ is a sum of hermitian squares in $A$. We identify $-1$ with $-I_n$, where $I_n$ denotes the $n\x n$ identity matrix in $A=M_n(F)$.

Since $\s$ is symplectic, we have
$\s=\Int(S)\circ t$, where $t$ denotes transposition and $S\in \GL_n(F)$ satisfies $S^t=-S$. 
Since $S$ is skew-symmetric, there exists a matrix $P\in \GL_n(F)$ such that $P^tSP=B$, where $B$ is the block diagonal matrix with $m$ blocks 
$\left[\begin{smallmatrix}
0&1\\
-1 & 0
\end{smallmatrix}\right]$ on the diagonal. 

Let $X$ be the block diagonal matrix with $m$ blocks
$\left[\begin{smallmatrix}
0&1\\
1 & 0
\end{smallmatrix}\right]$ on the diagonal. Then $X^tBX=B^{-1}$. 
Hence with $Y=PXP^t$, we have $Y^tSY=S^{-1}$.
Thus
\[
\s(SY)SY=S (SY)^t S^{-1}SY= SY^tS^tY=SY^t(-S)Y=-SS^{-1}=-I_n.
\qedhere
\]
\end{proof}

\section{Positive noncommutative polynomials}\label{sec5}

\subsection{Algebras of generic matrices with involution}
\label{sec:generic}

After studying the PS~Conjecture in the setting of
central simple algebras with involution, we proceed to interpret
these results as well as Theorem \ref{thm:NCh17} for
non-dimensionfree positivity of noncommutative (NC) polynomials.

Motivated by problems in optimization and control theory,
Helton \cite{Hel} 
proved that a symmetric real or complex
NC polynomial, all of  
whose images under algebra $*$-homomorphisms into 
$M_n(\R)$, $n\in\N$, 
are positive semidefinite (i.e., a dimensionfree positive NC polynomial), 
is a sum of hermitian squares. What we are interested in, is 
positivity under evaluations in $M_n(\R)$ for a \emph{fixed} $n$.

To tackle this problem we 
introduce the language of generic matrices, 
cf.~\cite[Chapters 1 and 3]{Pro1} or \cite[\S 1.3]{Row}.
Verifying a condition on evaluations of an NC polynomial
in the algebra of $n\times n$ matrices is often conveniently done
in the algebra of generic matrices. 
 In this subsection we recall
the definition of generic matrices with involution, 
while our main result on positive NC polynomials
(i.e., a Positivstellensatz) is presented in the next subsection.

As in the classical construction of the algebra of generic
matrices,
it is possible to construct the algebra of generic matrices \emph{with
involution}, see e.g.~\cite[\S 20]{Pro2} or \cite[\S II]{PS}. 
To each type of involution (orthogonal, symplectic
and unitary) an algebra of generic matrices
with involution can be associated, as we now explain.
We assume from now on that 
$K$ is a field of characteristic
$0$ with involution $*$ and fixed field $F$.

Let $K\axstar$ be the free algebra with
involution over $(K,*)$, i.e.,  the algebra with involution,
freely generated by the
noncommuting variables $\bar X:=
(X_1,X_2,\ldots )$. Its elements (called \emph{NC polynomials})
are (finite) linear combinations of
words in (the infinitely many) letters $\bar X, \bar X^*$.

Fix a type J $\in\{$orthogonal, symplectic, unitary$\}$.
Let $\got a_{\J_n}\subseteq K\axstar$ 
denote the ideal of all identities satisfied by degree $n$ central simple
$K$-algebras with type $\J$ involution.
That is, $f=f(X_1,\ldots,X_k,X_1^*,\ldots
,X_k^*)\in K\axstar$ is an element of $\got a_{\J_n}$ if and only if for every
central simple algebra $A$ of degree $n$ with type $\J$ involution $\sigma$  
and every $a_1,\ldots,a_k\in A$, 
\[f(a_1,\ldots,a_k,\sigma(a_1),\ldots,\sigma(a_k))=0.\]
Then $\GM_n(K,{\J}):=
K\axstar/\got a_{\J_n}$ is the \emph{algebra of generic $n\times n$ 
matrices
with type $\J$ involution}.

\begin{remark} An alternative description of the algebra of generic matrices with involution can be obtained as follows.
Let $\zeta:=(\zeta_{ij}^{(\ell)}\mid1\leq i, j\leq n,\,\ell\in\N)$ 
denote commuting
variables and form the polynomial algebra $K[\zeta]$ endowed
with the involution extending $*$ and fixing $\zeta_{ij}^{(\ell)}$
pointwise.
Consider the
$n\times n$ matrices $Y_{\ell}:=\big[
\zeta_{ij}^{(\ell)}\big]_{1\leq i,j\leq n}\in M_n(K[\zeta])$,  
$\ell\in\N$. Each $Y_{\ell}$ is
called a \textit{generic matrix}. 

\begin{enumerate}[\rm (a)]
\item
If  $\J\in\{$orthogonal, unitary$\}$, then 
the (unital) $K$-subalgebra of $M_n(K[\zeta])$ 
generated by the $Y_\ell$ and their
transposes is (canonically) isomorphic to $\GM_n(K,{\rm J})$.
\item
If $\J=$ symplectic, then $n$ is even, say $n=2m$. Consider
the usual symplectic involution 
$$\begin{bmatrix}
x&y\\
z&w
\end{bmatrix}
\mapsto
\begin{bmatrix}
w^t&-y^t\\
-z^t&x^t
\end{bmatrix}
$$
on $M_{2m}(K[\zeta])$. Then the (unital) $K$-subalgebra of $M_n(K[\zeta])$ 
generated by the $Y_\ell$ and their
images under this involution 
is (canonically) isomorphic to $\GM_n(K,{\J})$.
\end{enumerate}
\end{remark}

If $n=1$, then $\J\in\{$orthogonal, unitary$\}$ and 
$\GM_1(K,\J)$ is isomorphic to $K[\zeta]$ endowed with the involution
introduced above. Hence in the sequel we will always assume $n\geq 2$.

Let $\J\in\{$orthogonal, symplectic, unitary$\}$.
For $n\geq 2$, $\GM_n(K,\J)$ is a 
PI algebra and a domain (cf.~\cite[\S II]{PS}). Hence its
central localization is a division algebra $\UD_n(K,\J)$ with involution, 
which we call
the \emph{universal division algebra} with type $\J$  involution of degree~$n$.
As we will only consider the canonical involution on $\GM_n(K,\J)$ and
$\UD_n(K,\J)$ we use $*$ to denote it.

\begin{remark}
Our approach to generic matrices is purely algebraic. A
representation-theoretic viewpoint with a more geometric flavour 
can be found in \cite{Pro2}.
\end{remark}

\subsection{A Positivstellensatz}\label{sec6}

Let $\KK\in\{\R,\C\}$ be endowed with the complex conjugation involution $\bbar$.
Our aim in this subsection is to deduce a non-dimensionfree
version of Helton's sum of hermitian squares theorem.
We will describe symmetric NC polynomials all of whose evaluations in 
$M_n(\KK)$ are positive semidefinite, see Theorem~\ref{thm:positiv}.

The main line of reasoning is the same as in \cite[\S 4]{PS},
while the dependence on Tarski's transfer principle from real
algebraic geometry is isolated and emphasized in Lemma \ref{lem:uf} below.
The lemma characterizes 
total $*$-positivity in the algebra of
generic matrices $\GM_n(\KK,\J)$. 
Its proof uses some elementary
model theory, e.g.~Tarski's transfer principle for real closed fields. 
All the necessary 
background can be found in 
\cite[\S 1 and \S 2]{PD} or, alternatively, \cite[\S 1]{BCR}.

\begin{lemma}\label{lem:uf}
Let $n\in \N$. If $\KK=\R$, let $\J=$ orthogonal and if
$\KK=\C$, let $\J=$ unitary. 
If
$a=a^*\in\GM_n(\KK,\J)$ is totally 
$\sigma$-positive under each $*$-homomorphism from
$\GM_n(\KK,\J)$ to $M_n(\KK)$ endowed with a positive type $\J$ involution $\sigma$, then
$a$ is totally $*$-positive $($in $\UD_n(\KK,\J))$.
\end{lemma}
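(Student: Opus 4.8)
The plan is to reduce the statement to a transfer-principle argument. First I would unwind the definitions: an element $a=a^*\in\GM_n(\KK,\J)$ is totally $*$-positive in $\UD_n(\KK,\J)$ precisely when, for every $*$-ordering $P$ on the centre $Z$ of $\UD_n(\KK,\J)$, the quadratic form $x\mapsto\Trd(a^*xa)$ over $Z$ is positive semidefinite with respect to $\leq_P$. So what has to be shown is: positivity under all $*$-homomorphisms $\GM_n(\KK,\J)\to M_n(\KK)$ (with $M_n(\KK)$ carrying a \emph{positive} type $\J$ involution) forces positivity over every $*$-ordering of the function field $Z$.

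The key step is to recognise that evaluating $\GM_n(\KK,\J)$ at a point is the same as specialising the generic matrices $Y_\ell$ to concrete matrices in $M_n(\KK)$, and that the positive-involution condition on $M_n(\KK)$ is exactly an open/closed semialgebraic condition on the entries of the Gram matrix defining the involution. Concretely, I would: (i) fix a $*$-ordering $P$ on $Z$ and pass to a real closure $R$ of $(Z,P)$; (ii) over $R$ the algebra $\UD_n(\KK,\J)\otimes_Z R$ becomes a matrix algebra $M_n(R)$ (or $M_n(R(\sqrt{-1}))$ in the unitary case) with a positive type $\J$ involution, and the failure of total $*$-positivity of $a$ would mean there is an $x$ with $\Trd(a^*xa)<_P 0$; (iii) this is a first-order statement, with parameters the entries of $a$ and of the involution's Gram matrix, all of which lie in $Z\subseteq R$; (iv) by Tarski's transfer principle (Tarski--Seidenberg), a first-order statement with parameters in the real closed field $R$ that has a solution over $R$ already has a solution in any real closed field into which these parameters embed — in particular, after clearing denominators and specialising the transcendentals of $Z$ to suitable real (or complex) numbers, over $\KK$ itself. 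This produces a $*$-homomorphism $\GM_n(\KK,\J)\to M_n(\KK)$, landing in a matrix algebra equipped with a positive type $\J$ involution, under which $a$ fails to be totally $\sigma$-positive, contradicting the hypothesis.

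The main obstacle I anticipate is bookkeeping the positivity condition on the involution correctly through the transfer step: one must ensure that the specialised involution on $M_n(\KK)$ is again positive (not merely an involution of the right type), and that the $*$-homomorphism is genuinely a $*$-homomorphism for the specialised involution. This is handled by noting that ``the involution trace form $T_\sigma$ is positive semidefinite'' is itself a closed semialgebraic condition on the Gram matrix entries, so it is preserved under the same transfer argument — one applies Tarski's principle to the conjunction of ``$\Trd(a^*xa)<0$ for some $x$'' and ``$T_\sigma\succeq 0$'', both satisfied over $R$, hence both realisable over $\KK$. A secondary technical point is choosing denominators so that the specialisation is defined (the discriminant of the structure constants, etc., must remain nonzero), which is routine since only finitely many nonvanishing conditions are involved and $\KK$ is infinite.

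Thus the argument is essentially: \emph{total $*$-positivity over the generic (function-field) point is detected by positivity at all real-closed specialisations, and Tarski transfer moves such specialisations down to $\KK$-points}, which is precisely the hypothesis.
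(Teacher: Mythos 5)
Your overall strategy --- argue by contraposition, pick a $*$-ordering of the centre $Z$ witnessing the failure of total $*$-positivity, pass to the real closure $\bar Z^\rc$, and use Tarski's transfer principle to produce an evaluation over $\KK$ --- is exactly the paper's. One presentational difference: the paper invokes the Procesi--Schacher classification to identify $\UD_n(\KK,\J)\otimes_Z\bar Z^\rc$ with $M_n(\bar Z^\rc)$ (resp.\ $M_n(\bar Z)$) carrying the standard (conjugate) transpose involution, so the specialized involution on $M_n(\KK)$ is automatically $\bbar t$, hence positive, and Example~\ref{ex:psd} converts ``not $\sigma$-positive'' into the semialgebraic condition ``not positive semidefinite''; you instead carry a general Gram matrix and add the positivity of $T_\sigma$ as an extra semialgebraic condition. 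Both routes can be made to work.

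However, the transfer step as you justify it has a genuine gap. You invoke Tarski in the form ``a first-order statement with parameters in $R$ that is solvable over $R$ is solvable over any real closed field into which these parameters embed.'' But your parameters (the entries of $a$, the Gram matrix of the involution, the witness $x$) lie in $Z$ or $\bar Z^\rc$, and the ordered field $(Z,\leq_P)$ need \emph{not} embed into $\R$: the $*$-ordering $P$ is in general non-Archimedean, so there is no order-compatible specialization of these parameters into $\KK$, and the principle you state only transfers \emph{upwards} to real closed extensions of $R$, not downwards to $\R$. The correct move --- which is what the paper does --- is to eliminate the transcendental parameters by existentially quantifying over them: lift the relevant central elements (the diagonal entries $\alpha_i$ of $\Trd(x^*x)$ and a diagonal entry $\beta_1$ of $\Trd(x^*ax)$ that is negative at $P$) to noncommutative polynomials $\hat\alpha_i,\hat\beta_1\in\KK\axstar$, so that the elementary statement ``there exist $n\times n$ matrices $x$ with $\hat\alpha_i(x,\bar x^t)$ positive semidefinite and $\hat\beta_1(x,\bar x^t)$ not positive semidefinite'' has parameters in $\KK$ only; it holds over $\bar Z^\rc$ (witnessed by the generic point itself) and therefore, by Tarski, over $\KK$, yielding the required $*$-homomorphism to $(M_n(\KK),\bbar t)$. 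Your parenthetical about ``specialising the transcendentals to suitable real numbers'' points in this direction, but without turning the transcendentals into quantified variables the version of the transfer principle you state does not apply.
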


\begin{proof}
Suppose $a\in \GM_n(\KK,\J)$ is not totally $*$-positive. Then there 
is a $*$-ordering $\leq$ of the fixed field $Z$ of the centre 
of $\UD_n(\KK,\J))$ under which
$\Trd(x^*ax)$ is not positive semidefinite. 
Let $\langle \alpha_1,\ldots, \alpha_{m}\rangle$
be the diagonalization
of $\Trd(x^*x)$ with $\alpha_i=\alpha_i^* \in Z$. 
(Here $m=n^2$ if the involution is of the first kind and $m=2n^2$
otherwise.)
Given that $Z$ is the field of
fractions of the symmetric centre $Z_0$ of $\GM_n(\KK,\J)$, we may even assume $\alpha_i\in
Z_0$. We also diagonalize $\Trd(x^*ax)$ as $\langle \beta_1,\ldots, \beta_{m}\rangle$ with $\beta_i\in Z_0$. Clearly, $\alpha_i>0$ and one of the $\beta_i$,
say $\beta_1$, is negative with respect to the given $*$-ordering $\leq$. 
Let $\bar Z^\rc$ denote the real closure of $Z$ with respect to this
ordering and form $A:=\UD_n(\KK,\J)\otimes_Z \bar Z^\rc$ endowed
with the involution $\sigma=*\otimes {\rm id}$. Then $A$ is
a central simple algebra over a real closed (if $\J=$ orthogonal)
or algebraically closed field (if $\J=$ unitary). Moreover, its involution
$\sigma$
is positive. Hence by the classification result \cite[Theorem 1.2]{PS} of Procesi and 
Schacher, $A$ is either $M_n(\bar Z^\rc)$ 
endowed with the transpose (if $\J=$ orthogonal) or
$M_n(\bar Z)$ endowed with the complex conjugate transpose involution (if $\J=$ unitary).
Here $\bar Z$ is the algebraic closure $\bar Z^\rc(\sqrt{-1})$ of 
$\bar Z^\rc$ and the complex conjugate maps $r+t \sqrt{-1} 
\mapsto r- t \sqrt{-1}$ for $r,t\in\bar Z^\rc$.

For $b\in\GM_n(\KK,\J)$ let $\hat b\in \KK\axstar$ denote a
preimage of $b$ under the canonical map $\KK\axstar\to\GM_n(\KK,\J)$.
Every $*$-homomorphism $\GM_n(\KK,\J)\to M_n(L)$ for a $*$-field extension
$L$ of $\KK$, where $M_n(L)$ is given a type $\J$ involution,
yields a $*$-homomorphism $\KK\axstar\to M_n(L)$, so is
essentially given by a point $s\in M_n(L)^\N$ describing the images
of the $X_i$ under this induced map.

By construction, the image $\beta_1\otimes 1$ of $\beta_1$ under the
embedding of algebras with involution $\GM_n(\KK,\J)\to A$
is not $\sigma$-positive. Let $s$ denote the corresponding evaluation
point.
By Example \ref{ex:psd}, this means that $
\hat \beta_1(s,\bar{s}^t)=\beta_1\otimes 1$ is
not positive semidefinite. Consider the following elementary statement:
\begin{equation}\label{eq:1st}
\begin{split}
\exists \,n\times n \text{ matrices }x=(x_1,\ldots, x_N) : \; &
\hat\alpha_i(x,\bar{x}^t) \text{ is positive semidefinite } \land \\
& \hat\beta_1(x,\bar{x}^t) \text{ is not positive semidefinite}.
\end{split}
\end{equation}
($N$ is the maximal number of variables appearing
in one of the $\hat\alpha_i, \hat\beta_1$.)

Obviously such $n\times n$ matrices $x_i$ can be found over 
$\bar Z^\rc$ or $\bar Z$; just take $x_i=s_i$.
By Tarski's transfer principle, the above elementary statement \eqref{eq:1st}
can be satisfied in $\KK$. This yields a $*$-homomorphism
$\KK\axstar\to M_n(\KK)$ endowed with the (positive) involution $\bbar t$ and in
turn (by universality) a $*$-homomorphism $\GM_n(\KK,\J)\to (M_n(\KK),\bbar t)$. 
By the construction, the image of $a$ under this mapping will
not be positive semidefinite. 
This finishes the proof.
\end{proof}

In order to state the Positivstellensatz, we need to recall
the notion of {\it central polynomials} for $n\times n$ matrices. These are
$f\in K\axstar$ whose image in $\GM_n(K,\J)$ is central.
Equivalently, the image of $f$ under a $*$-homomorphism
from $K\axstar$ to $M_n(K)$ endowed with a type $\J$ involution,
is always a scalar matrix. 
If it is nonzero, we call $f$ {\it nonvanishing}.
The existence of nonvanishing central polynomials
is nontrivial; we refer to \cite[\S 1; Appendix A]{Row} for details.

\begin{theorem}[Positivstellensatz]\label{thm:positiv}
Suppose $\KK\in\{\R,\C\}$ is endowed with the complex conjugate involution $\bbar$.
Let $g=g^*\in \KK\axstar$, $n\in \N$ and 
fix a type
$\J\in\{$orthogonal, unitary$\}$ according to the type of involution
on $\KK$.
Choose $\alpha_1,\ldots,\alpha_m\in \KK\axstar$ whose images
in $\GM_n(\KK,\J)$ form a diagonalization of the quadratic
form $\Trd(x^*x)$ on $\UD_n(\KK,\J)$.
Then the following are equivalent:
\begin{enumerate}[\rm (i)]
\item
for any $s\in M_n(\KK)^\N$, $g(s,\bar{s}^t)$ is positive semidefinite;
\item 
there exists a nonvanishing central polynomial $h\in \KK\axstar$ for
$n\times n$ matrices
and $p_{i,\ve}\in \KK\axstar$
with
\[h^* g h \equiv
\sum_{\ve\in \{0,1\}^m} \alpha^\ve \sum_{i} p_{i,\ve}^*
p_{i,\ve}  \pmod {\got a_{\J_n}}.\]
\end{enumerate}
\end{theorem}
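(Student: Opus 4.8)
The plan is to relate positivity over $M_n(\KK)$ for a \emph{fixed} $n$ to total $*$-positivity in the universal division algebra $\UD_n(\KK,\J)$, and then apply the Procesi--Schacher Artin--Schreier theorem (Theorem~\ref{thm:NCh17}) inside $\UD_n(\KK,\J)$. The implication (ii)$\implies$(i) should be the easy direction: if $h^*gh \equiv \sum_\ve \alpha^\ve \sum_i p_{i,\ve}^* p_{i,\ve} \pmod{\got a_{\J_n}}$, then for any $s\in M_n(\KK)^\N$ one evaluates both sides at $(s,\bar s^t)$. Each $\alpha_i(s,\bar s^t)$ is a \emph{scalar} matrix equal to an entry in a diagonalization of $\Trd(x^*x)$; by Example~\ref{ex:psd} (applied to $M_n(\KK)$ with $\bbar t$) these entries are nonnegative real numbers (sums of squares/moduli), so the right-hand side is manifestly positive semidefinite, hence $h(s,\bar s^t)^* g(s,\bar s^t) h(s,\bar s^t)\succeq 0$. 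Since $h$ is a nonvanishing central polynomial, $h(s,\bar s^t)=\lambda I_n$ with $\lambda\in\KK$; if $\lambda\neq 0$ this forces $g(s,\bar s^t)\succeq 0$, and the set where $h(s,\bar s^t)=0$ is a proper Zariski-closed subset (a determinantal condition on the entries of $s$), so by continuity (or Zariski density of the complement, using that positive semidefiniteness of $g(s,\bar s^t)$ is a closed condition in the Euclidean topology) positivity extends to all $s$.

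For (i)$\implies$(ii): first observe that if $g(s,\bar s^t)\succeq 0$ for all $s\in M_n(\KK)^\N$, then in particular, for every $*$-homomorphism $\varphi\colon \GM_n(\KK,\J)\to M_n(\KK)$ (with $M_n(\KK)$ carrying a positive type $\J$ involution $\sigma$), the image $\varphi(a)$ of $a:=$ (class of $g$) is positive semidefinite; more precisely $\Trd(\sigma(y)\,\varphi(a)\,y)\geq 0$ for all $y\in M_n(\KK)$ by Example~\ref{ex:psd}. But we want this for $a=g$ directly only after we know $g$ is symmetric in $\GM_n(\KK,\J)$ and lies in $\Sym$; since $g=g^*$ in $\KK\axstar$ this is automatic. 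Now invoke Lemma~\ref{lem:uf}: the hypothesis says precisely that (the class of) $g$ is totally $\sigma$-positive under each $*$-homomorphism to $M_n(\KK)$ with positive involution, so Lemma~\ref{lem:uf} gives that $g$ is totally $*$-positive in $\UD_n(\KK,\J)$. Apply Theorem~\ref{thm:NCh17} with $A=\UD_n(\KK,\J)$, $\sigma=*$, and the weights $\alpha_1,\ldots,\alpha_m\in Z_0$ (the symmetric centre) coming from a diagonalization of $\Trd(x^*x)$: there exist $y_{i,\ve}\in\UD_n(\KK,\J)$ with
\[
g = \sum_{\ve\in\{0,1\}^m}\alpha^\ve\sum_i {}^* y_{i,\ve}\, y_{i,\ve}
\]
in $\UD_n(\KK,\J)$. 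It remains to clear denominators: each $y_{i,\ve}$ is a ratio of an element of $\GM_n(\KK,\J)$ by a symmetric central element, and the $\alpha_j$ can be taken in $Z_0$, so after multiplying through by a common symmetric central denominator squared (realised by a nonvanishing central polynomial $h$) we obtain $h^* g h = \sum_\ve \alpha^\ve \sum_i p_{i,\ve}^* p_{i,\ve}$ in $\GM_n(\KK,\J)$, i.e.\ modulo $\got a_{\J_n}$ in $\KK\axstar$. Lifting the $p_{i,\ve}$ and $h$ to $\KK\axstar$ via a set-theoretic section of $\KK\axstar\to\GM_n(\KK,\J)$ yields the stated congruence.

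\textbf{Main obstacle.} The serious content is all packaged in Lemma~\ref{lem:uf} (the Tarski transfer argument), which we are allowed to assume. The remaining delicate point is the denominator-clearing step: one must be careful that the weights $\alpha_j$ appearing in the diagonalization of $\Trd(x^*x)$ over the fraction field $Z$ of $Z_0$ can genuinely be chosen in $Z_0$ (this is noted in the proof of Lemma~\ref{lem:uf}) and that a \emph{single} nonvanishing central polynomial $h$ can be used to simultaneously clear all denominators of the finitely many $y_{i,\ve}$ and to absorb any denominators in the $\alpha_j$; since $\GM_n(\KK,\J)$ is a prime PI algebra with centre $Z_0$ and a nonvanishing central polynomial exists (by Formanek/Razmyslov, cited as \cite[Appendix A]{Row}), one takes $h$ to be a suitable power of such a central polynomial times the product of the denominators, using that $Z_0$ is a domain. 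The other mild subtlety is in (ii)$\implies$(i): handling the locus $h(s,\bar s^t)=0$ via a density/continuity argument rather than pretending $h$ is invertible everywhere.
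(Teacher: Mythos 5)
Your proposal follows exactly the paper's route: (ii)$\implies$(i) by evaluating the congruence and using Zariski density of the complement of the vanishing locus of $h$, and (i)$\implies$(ii) by invoking Lemma~\ref{lem:uf} to get total $*$-positivity in $\UD_n(\KK,\J)$, applying Theorem~\ref{thm:NCh17}, clearing denominators by a nonzero central element, and lifting to the free algebra. The argument is correct and, if anything, spells out the easy direction and the denominator-clearing in more detail than the paper does.
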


\begin{proof}
Given a congruence as in (ii), it is clear
that (i) holds whenever $h(s,\bar{s}^t)\neq 0$. As the set of all such $s$ is
Zariski dense, (i) holds for all $s\in M_n(\KK)^\N$.

For the converse implication note that 
by Lemma \ref{lem:uf}, $g+\got a_{\J_n}$ is totally $*$-positive
in $\UD_n(\KK,\J)$. Hence
by Theorem \ref{thm:NCh17} we obtain a positivity certificate
$$
g+\got a_{\J_n} =\sum_{\ve\in \{0,1\}^m} (\alpha+\got a_{\J_n})^\ve \sum_{i} (x'_{i,\ve})^* 
x'_{i,\ve}
$$
for some $x'_{i,\ve}\in\UD_n(\KK,\J)$.
Clearing denominators, there are $x_{i,\ve}\in\GM_n(\KK,\J)$ and
a nonzero central $r\in\GM_n(\KK,\J)$ with
$$r^*(g+\got a_{\J_n})r= \sum_{\ve\in \{0,1\}^m} (\alpha+\got a_{\J_n})^\ve \sum_{i} x_{i,\ve}^*
x_{i,\ve}.
$$
Lifting this equality to the free algebra yields the desired conclusion.
\end{proof}

When $n=2$, the weights $\alpha$ are redundant (cf.~\S\ref{para.pos.rel}
or \cite[p.~405]{PS}) and
we obtain the following strengthening:

\begin{coro}\label{cor:positiv2}
Suppose $\KK\in\{\R,\C\}$ is endowed with the complex conjugate involution~$\bbar$.
Let $g=g^*\in \KK\axstar$, $n\in \N$ and 
fix a type
$\J\in\{$orthogonal, unitary$\}$ according to the type of involution
on $\KK$.
Then the following are equivalent:
\begin{enumerate}[\rm (i)]
\item
for any $s\in M_2(\KK)^\N$, $g(s,\bar{s}^t)$ is positive semidefinite;
\item
there exists a nonvanishing central polynomial $h\in \KK\axstar$ for
$2\times 2$ matrices
and $p_{i}\in \KK\axstar$
with
\[
h^* g h \equiv
\sum_{i} p_{i}^*
p_{i}  \pmod {\got a_{\J_2}}.\]
\end{enumerate}
\end{coro}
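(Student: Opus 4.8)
The plan is to reduce Corollary~\ref{cor:positiv2} to Theorem~\ref{thm:positiv} by showing that in the degree-two case the weights $\alpha^\ve$ contribute nothing new, i.e.\ they can be absorbed into the sums of hermitian squares. The starting observation is that, for $n=2$, the PS~Conjecture is true: this is precisely Proposition~\ref{prop4.1} (for orthogonal/symplectic involutions of the first kind, hence in particular the split orthogonal case relevant when $\KK=\R$) and its part~(ii) (for unitary involutions, relevant when $\KK=\C$). More to the point, Proposition~\ref{prop4.1} says that \emph{each entry $\alpha_i$ occurring in a diagonalization of $T_\s=\Trd(x^*x)$ is itself a sum of hermitian squares} in the relevant algebra with involution of degree~$2$.

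First I would record that the $\alpha_i$ in the statement are (images of) NC polynomials whose evaluations give a diagonalization of $\Trd(x^*x)$ on $\UD_2(\KK,\J)$, and that by the degree-two case of the PS~Conjecture applied in $\UD_2(\KK,\J)$ each such $\alpha_i$ equals a finite sum $\sum_j (y_{i,j})^* y_{i,j}$ with $y_{i,j}\in\UD_2(\KK,\J)$. Since each $\alpha_i$ is central (it lies in the fixed field of the centre), each product $\alpha^\ve\,p_{i,\ve}^*p_{i,\ve}$ that appears on the right-hand side of the Positivstellensatz of Theorem~\ref{thm:positiv}(ii) can be rewritten: writing $\alpha^\ve$ as a sum of hermitian squares $\sum_k z_k^* z_k$ (by multiplicativity of ``sum of hermitian squares'' for commuting central factors, exactly as in Example~\ref{ex2.2} and Remark~\ref{rem2.4}) and using that the $z_k$ are central, one gets $\alpha^\ve p_{i,\ve}^*p_{i,\ve}=\sum_k (z_k p_{i,\ve})^*(z_k p_{i,\ve})$, a plain sum of hermitian squares with no weights. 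Thus the certificate of Theorem~\ref{thm:positiv}(ii) collapses to the weight-free form asserted in Corollary~\ref{cor:positiv2}(ii), after clearing the (central) denominators introduced by passing from $\UD_2$ to $\GM_2$ — which only multiplies $h$ by another nonvanishing central polynomial and leaves the shape of the identity intact. The reverse implication (ii)$\implies$(i) is identical to that in Theorem~\ref{thm:positiv}: a congruence of that form evaluates to something positive semidefinite wherever $h(s,\bar s^t)\neq0$, and such $s$ are Zariski dense in $M_2(\KK)^\N$.

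The one point that needs care — and which I expect to be the main (though modest) obstacle — is the bookkeeping when lifting back to the free algebra $\KK\axstar$: the identities ``$\alpha_i=\sum_j y_{i,j}^*y_{i,j}$'' and ``each $y_{i,j}$ is central'' hold in $\UD_2(\KK,\J)$, so one must clear denominators to land in $\GM_2(\KK,\J)$ and then lift mod $\got a_{\J_2}$, keeping track that the extra central factors accumulated this way remain nonvanishing (a product of nonvanishing central polynomials is nonvanishing) and can be folded into a single $h$. Everything else is the same routine as in the proof of Theorem~\ref{thm:positiv}; no new real-algebraic input is required, since Lemma~\ref{lem:uf} and Theorem~\ref{thm:NCh17} are invoked exactly as before, only now specialized to $n=2$ where Proposition~\ref{prop4.1} makes the weights disappear.
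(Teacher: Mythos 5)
Your proposal is correct and is essentially the paper's own (implicit) argument: the paper deduces the corollary from Theorem~\ref{thm:positiv} simply by citing \S\ref{para.pos.rel}, i.e.\ Proposition~\ref{prop4.1} applied to the degree-two algebra $\UD_2(\KK,\J)$, which is exactly the absorption of the weights $\alpha^\ve$ into sums of hermitian squares (plus denominator clearing) that you carry out. One cosmetic point: the identity $\alpha^\ve p_{i,\ve}^*p_{i,\ve}=\sum_k(z_kp_{i,\ve})^*(z_kp_{i,\ve})$ requires only that $\alpha^\ve$ is central (so it can be moved between $p_{i,\ve}^*$ and $p_{i,\ve}$), not that the individual $z_k$ are central --- and likewise $\UD_2(\R,\mathrm{orthogonal})$ is a quaternion \emph{division} algebra rather than a split one, which is harmless since Proposition~\ref{prop4.1} covers arbitrary quaternion algebras.
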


\begin{remark}
By Tarski's transfer principle,
Theorem \ref{thm:positiv} and Corollary
\ref{cor:positiv2} hold with $\KK$ replaced by
any real closed  
or algebraically closed field of characteristic $0$.
\end{remark}

We conclude the paper with a problem:
can Theorem 
\ref{thm:positiv} be used to give a proof of Helton's sum of
hermitian
squares theorem?

\section*{Acknowledgments}{\small

Both authors wish to thank David Lewis for some very useful 
observations.
The second author wishes to thank Jaka Cimpri\v c for arranging a very nice stay at the University of Ljubljana in May 2008. The first author was supported by the Slovenian Research Agency (project no. Z1-9570-0101-06). The second author was supported by the Science Foundation Ireland Research Frontiers Programme (project no.
07/RFP/MATF191).}

\end{document}